\newcommand{\on}[1]{\operatorname{#1}}
\newcommand{\ov}[1]{{\overline{#1}}}
\newcommand{\ca}[1]{{\mathcal{#1}}}
\newcommand{\wt}[1]{{\widetilde{#1}}}
\newcommand{\GL}{\operatorname{GL}}
\newcommand{\cone}{\operatorname{cone}}
\newcommand{\rec}{\operatorname{rec}}
\renewcommand{\val}{\operatorname{val}}
\newcommand{\aff}{\textup{aff}}
\newcommand{\sph}{\textup{sph}}
\newcommand{\an}{\operatorname{an}}
\newcommand{\gr}{\operatorname{gr}}
\newcommand{\Spec}{\operatorname{Spec}}
\newcommand{\CH}{\operatorname{CH}}
\newcommand{\PP}{\operatorname{PP}}
\def \C{\mathbb{C}}
\def \N{\mathbb{N}}
\def \R{\mathbb{R}}
\def \Z{\mathbb{Z}}
\def\O {{\mathcal O}}
\def\E {{\mathcal E}}
\def\X {{\mathfrak X}}
\def\U {{\mathcal U}}
\def\B {{\mathfrak B}}
\def\m {{\mathfrak m}}
\def\k {{\bf k}}
\def \w{{\omega}}
\numberwithin{equation}{section}
\theoremstyle{definition}
\newtheorem{defn}[equation]{Definition}
\newtheorem{rem}[equation]{Remark}
\theoremstyle{plain}
\newtheorem{lem}[equation]{Lemma}
\newtheorem{prop}[equation]{Proposition}
\newtheorem{thm}[equation]{Theorem}
\newtheorem{cor}[equation]{Corollary}
\newtheorem{conj}[equation]{Conjecture}
\newtheorem{prop-def}[equation]{Proposition-Definition}
\begin{document}

\title[Equivariant Chern Classes of Toric Vector Bundles over a DVR]{Equivariant Chern Classes of Toric Vector Bundles over a DVR and Bruhat--Tits Buildings}
\author[Botero, Kaveh, Manon]{Ana Mar\'ia Botero and Kiumars Kaveh and Christopher Manon}

\thanks{A.\,Botero was funded by the Deutsche Forschungsgemeinschaft (DFG, German Research Foundation) – Project-ID 491392403 – TRR 358.  C. Manon was partially supported by Simons Collaboration Grant 587209 and National Science Foundation Grant DMS-2101911. K. Kaveh was partially supported by a National Science Foundation Grant (Grant ID: DMS-2101843) and a Simons Collaboration Grant for Mathematicians.}

\date{\today}

\maketitle


\begin{abstract}
We define equivariant Chern classes of a toric vector bundle over a proper toric scheme over a DVR. We provide a combinatorial description of them in terms of piecewise polynomial functions on the polyhedral complex associated to the toric scheme, which factorize through to an extended Bruhat--Tits building. We further motivate this definition from an arithmetic perspective, connecting to the non-Archimedean Arakelov theory of toric varieties.
\end{abstract}

\tableofcontents

\section{Introduction}
Combinatorial descriptions of equivariant Chern classes of toric vector bundles over a field have appeared in e.\,g.\,\cite[Prop. 3.1]{Payne-chern}, or more recently in \cite[Corollary 3.5]{KM}. In the latter, the main gadget is the classification of toric vector bundles in terms of piecewise linear maps to the extended (spherical) Tits building associated to the general linear group. We recall the result (see also Theorem \ref{th-equiv-Chern-tvb-field} in the text).
\begin{thm}
Let $\E$ be a rank $r$ toric vector bundle over a toric variety $X_{\Sigma}$ with $\Phi_{\E} \colon |\Sigma| \to \widetilde{\B}_{\sph}\left(\on{GL}(r)\right)$ its corresponding piecewise linear map. Then for any $1 \leq i \leq r$, the $i$-th equivariant Chern class $c_T^i(\E)$ is represented by the piecewise polynomial function $\epsilon_i \circ \Phi_{\E}$, where $\epsilon_i: \wt{\B}_\sph(\GL(r)) \to \R$ denotes the $i$-th elementeary symmetric function on the extended spherical Tits building. 
\end{thm}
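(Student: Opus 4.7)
The plan is to verify the identity $c_T^i(\E) = \epsilon_i \circ \Phi_\E$ of piecewise polynomial functions on $|\Sigma|$ cone by cone. Under the Brion--Payne identification of the equivariant Chow ring $A^*_T(X_\Sigma)$ with the ring $\PP(\Sigma)$ of integral piecewise polynomial functions on $\Sigma$, an equivariant class is determined by its polynomial restriction to each maximal cone, so it suffices to verify equality on each such cone.

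First I would invoke Payne's formula for equivariant Chern classes of toric vector bundles: on a full-dimensional cone $\sigma$, the restriction $\E|_{U_\sigma}$ admits an equivariant splitting $\bigoplus_{j=1}^{r}\O(u_{\sigma,j})$ with characters $u_{\sigma,1},\ldots,u_{\sigma,r}\in M$, and then $c_T^i(\E)|_\sigma$ is the polynomial function $v\mapsto e_i(\langle u_{\sigma,1},v\rangle,\ldots,\langle u_{\sigma,r},v\rangle)$. Next I would unpack the KM construction of $\Phi_\E$: on the same cone $\sigma$, the map $\Phi_\E$ lands in the apartment of $\wt{\B}_\sph(\GL(r))$ determined by this equivariant frame, sending $v\in\sigma$ to the point with apartment coordinates $(\langle u_{\sigma,1},v\rangle,\ldots,\langle u_{\sigma,r},v\rangle)$. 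By the very definition of $\epsilon_i$ as the $i$-th elementary symmetric function of apartment coordinates --- well-defined on the whole building because the Weyl group acts by permuting these coordinates --- the composition $\epsilon_i\circ\Phi_\E$ restricted to $\sigma$ reproduces Payne's expression.

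The step I expect to require the most care is checking that both sides are piecewise polynomial with respect to a common refinement and glue consistently along shared faces of adjacent cones. This is where the geometry of the building enters: any transition between two local splittings over a face lies in the stabilizer of the corresponding point of $\wt{\B}_\sph(\GL(r))$, and that stabilizer acts on the apartments containing the point by permutations of coordinates. Consequently the values of $\epsilon_i\circ\Phi_\E$ are independent of the chosen splitting at shared faces, so the cone-by-cone matching with Payne's polynomial formula upgrades to the claimed global identity in $\PP(\Sigma)$.
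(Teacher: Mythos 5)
Your proposal is correct and follows essentially the same route as the source of this statement: the paper does not reprove it but cites Payne and Kaveh--Manon, whose argument is exactly your cone-by-cone one --- equivariant splitting of $\E|_{U_\sigma}$ into line bundles $\O(u_{\sigma,j})$, the Whitney formula under the Brion--Payne identification $\CH^*_T(X_\Sigma)\simeq \PP^*(\Sigma)$, and the observation that on the adapted apartment $\epsilon_i\circ\Phi_\E$ restricted to $\sigma$ is $e_i(\langle u_{\sigma,1},\cdot\rangle,\ldots,\langle u_{\sigma,r},\cdot\rangle)$. The gluing concern in your last paragraph is already taken care of by the intrinsic definition of $\epsilon_i$ via the dimension jumps $\dim(E_{\w\ge a_j}/E_{\w>a_j})$, which is apartment-independent, so once both sides agree on every cone they agree as piecewise polynomial functions on $|\Sigma|$ without any further stabilizer argument.
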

Note that the equivaraint Chern class $c_T^i(\E)$ is a homogeneous degree-$i$ element in the equivariant Chow cohomology ring of the toric variety $\CH^*_T(X_{\Sigma})$. The latter is a graded algebra isomorphic to the graded algebra of piecewise polynomial functions on the fan $\Sigma$ (see Theorem~\ref{th:payne-chow}).

The main goal of this article is a similar description of equivariant Chern classes of toric vector bundles over a discrete valuation ring (DVR). The rough idea is as follows. Toric vector bundles over a DVR are classified by piecewise affine maps from the (support of the) polyhedral complex associated to the toric scheme into an (extended) affine Bruhat--Tits building (this is shown in \cite{KMT} and recalled in Theorem \ref{th:vb-dvr}). In this article, we define the equivariant Chern classes of a toric vector bundle $\E$ over a toric scheme over a DVR as the equivariant Chern classes of the restriction of $\E$ to the special fiber (see Definition \ref{def:eq-chern}). The equivariant Chern classes of $\E$ are thus elements in the equivariant Chow cohomology ring of the special fiber, which again has a description in terms of piecewise polynomial functions (see Section \ref{sec:equiv-chow}). Similar to the case over a field, we describe representatives in terms of the piecewise affine map and the elementary symmetric functions on the (extended) affine Bruhat--Tits building (see Theorem \ref{th:intro} for the precise statement). 

We now present a more detailed version of our results. 

Let $K$ be a discretely valued field with valuation $\val : K \to \ov{\Z} = \Z\cup \{\infty\}$. We denote by $\O$ the corresponding
valuation ring. The scheme $S=\Spec(\O)$ has two points: the
generic point $\eta$ corresponding to the prime ideal $\{0\}$ and the special point $s$ corresponding
to the maximal ideal $\m$. For a scheme $\X$ over $S$, we denote by $\X_{\eta}$ the
generic fiber, and by $\X_s$ the special fiber. 

Let $T$ be a split torus over $S$ of relative dimension $n$. We denote the base change to $\operatorname{Spec}(K)$ and to $\operatorname{Spec}(\k)$ by $T_{\eta}$ and $T_{s}$, respectively. A toric scheme over $S$ of relative dimension $n$ is a normal integral
separated scheme of finite type $\X$ over $S$ equipped with a dense open embedding $T_{\eta} \hookrightarrow \X_{\eta}$ and an action of $T$ on $\X$ over $S$ which extends the translation action
of $T_{\eta}$ on itself. While toric varieties (over a field) are classified by fans in $N_{\R}$ (where $N$ is the cocharacter lattice of the torus), toric
schemes (over $S$) are classified by fans in $N_{\R} \times \R_{\geq0}$ (we refer to Section \ref{sec:toric-schemes} for details regarding this classification). For a fan $\Sigma$ in
$N_{\R} \times \R_{\geq 0}$ we denote the corresponding toric scheme by $\X_{\Sigma}$. By intersecting cones in $\Sigma$ with
$N_{\R} \times \{1\}$ one obtains a rational polyhedral complex $\Sigma_1$ in the affine space $N_{\R} \times \{1\}$. The generic
fiber $\X_{\eta}$ is a usual toric variety over $K$ with corresponding fan $\Sigma_0$ obtained by intersecting
the cones in $\Sigma$ with $N_{\R} \times \{0\}$. The (reduced) special fiber $\X_s$ is a union of toric varieties indexed by the polyhedra in $\Sigma_1$, glued in a way that is compatible with the rational polyhedral complex.  Moreover, a cone $\sigma$ in $\Sigma_0$ induces a horizontal cycle $V(\sigma)$ in $\X_{\Sigma}$, in the sense that the structure morphism $V(\sigma) \to S$ is dominant,
of relative dimension $n - \on{dim}(\sigma)$, while a polyhedron $\Delta \in \Sigma_1$ gives a vertical cycle $V(\Delta)$ in $\X_{\Sigma}$ in the sense
that its image by the structure morphism is the closed point $s$.
The cycle $V(\Delta)$ has itself the structure of a toric variety of dimension $\on{codim}(\Delta)$. 

Let $\E$ be a toric vector bundle over a toric scheme $\X_{\Sigma}$, that is a $T$-linearized vector bundle $\E$ over $\X_{\Sigma}$. As evidenced in \cite{KM} in the field case, and in \cite{KMT} in the DVR case, the correct objects to classify such vector bundles are spherical Tits and affine Bruhat--Tits buildings, respectively. Roughly speaking, a building is an (infinite) simplicial complex together with distinguished subcomplexes called apartments, which satisfy a list of axioms. There are two important types of buildings: affine and spherical. In an affine building, the apartments correspond to triangulations of an affine space, whereas in the spherical building, apartments correspond to triangulations of spheres. 

Let $E \simeq K^r$ be an $r$-dimensional $K$-vector space. We will only consider the spherical Tits building $\B_{\sph}(E)$ and the affine Bruhat--Tits building $\B_{\aff}(E)$ associated to the general linear group $\GL(E)$. 
We describe the realization spaces of both buildings in terms of the space $\tilde{\B}(E)$ of all \emph{level $m$ additive norms} (see Section \ref{sec:buildings}). We call the space of all level $m$ additive norms the \emph{total extended building of $\GL(E)$}. The space of level $0$ additive norms consists of valuation on $E$ extending the trivial valuation on $K$. After quotienting this space by a natural equivalence relation, we get $\B_\sph(E)$, (the realization space of) the Tits building of $\GL(E)$ (see Remark \ref{rem-realization-space-bldg}(i)). We denote the space of level $0$ additive norms by $\tilde{\B}_\sph(E)$ and call it the \emph{extended Tits building of $\GL(E)$}. Similarly, the space of additive norms of level $1$ consists of valuations on $E$ extending $\val$ on $K$. After quotienting this space by a natural equivalence relation, we get $\B_\aff(E)$, (the realization space of) the Bruhat-Tits building of $\GL(E)$ (see Remark \ref{rem-realization-space-bldg}(ii)). We denote the space of level $1$ additive norms by $\tilde{\B}_\aff(E)$ and call it the \emph{extended Bruhat--Tits building of $\GL(E)$}.

As we already mentioned, it is shown in \cite{KM} that toric vector bundles over a toric variety over a field are classified by piecewise linear maps from the (support of the) fan of the toric variety to the extended Tits building, whereas in \cite{KMT} it is shown that toric vector bundles over a toric scheme over $S$ are classified by piecewise affine maps from the (support of the) polyhedral complex associated to the toric scheme to the extended Bruhat--Tits building (this classification result is recalled in Section~\ref{sec:class}). 

Given a proper regular toric scheme $\X_{\Sigma}$ and a toric vector bundle $\E$ over $\X_{\Sigma}$, denote by $\Phi_{\E, 1}\colon |\Sigma_1| \to \B_{\aff}(E)$ the associated piecewise affine map. 

We define the equivariant $i$-th Chern class $c_i^T(\E)$ of $\E$ by its restriction to the special fiber, i.\,e.
\[
c_i^T(\E) \coloneqq c_i^T(\iota^*\E) \in \CH^i_{T_s}(\X_s),
\]
where $\iota \colon \X_s \hookrightarrow \X_{\Sigma}$ denotes the canonical inclusion morphism. 

An element in $\CH^i_{T_s}(\X_s)$ is represented by a tuple of piecewise polynomial functions $f =\left(f_\nu\right)_{\nu}$, where the index set consists of all vertices of $\Sigma_1$, and the $f_\nu$ are piecewise polynomial functions of degree $i$ on the fan $\Sigma_1(\nu)$, the star of $\Sigma_1$ at $\nu$, subject to some compatibility conditions (see Section \ref{sec:equiv-chow} for details, in particular Definition \ref{def:pp-poly}).

On the other hand, consider $\Lambda \in \wt{\B}_{\aff}(E)$, the image of a vertex $\nu$ under the piecewise affine map $\Phi_{\E, 1}$. This is a vertex of of the extended affine building, whose link can be identified with an extended spherical Tits building $\wt{\B}_{\sph}(E_{\Lambda})$ (Proposition \ref{prop-link-vertex}). On $\wt{\B}_{\sph}(E_{\Lambda})$ one has the $i$-th elementary symmetric function $\epsilon_i$ (see Section \ref{sec:class}). 

The following are the main results of this paper (Theorems \ref{th:link} and \ref{th:equi} in the text). We write $\X_{s,\nu}$ for the irreducible component of the special fiber $\X_s$ corresponding to a vertex $\nu \in \Sigma_1$.

\begin{thm}[Piecewise linear map corresponding to restriction of $\E$ to $\X_{s,\nu}$]  \label{th-intro1}
Consider the piecewise linear map $\Phi_\nu: |\Sigma_1(\nu)| \to \wt{\B}_\sph(E_\Lambda)$ obtained by restricting the piecewise affine map $\Phi_1=\Phi_{\E, 1}$ to a small neighborhood of $\nu$ in $\Sigma_1(\nu)$ and by looking at a small neighborhood (link) of $\Lambda = \Phi_1(\nu)$ in $\wt{\B}_\aff(E)$ which we can naturally identify with a small neighborhood of the origin in $\wt{\B}_{\sph}(E_\Lambda)$ (Proposition \ref{prop-link-vertex}). Then $\Phi_\nu$ is the piecewise linear map corresponding to the restriction of $\E$ to the $T_{s}$-toric variety $\X_{s,\nu}$.
\end{thm}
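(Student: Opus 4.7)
The strategy is to use the uniqueness part of the classification of toric vector bundles over a toric variety over a field by piecewise linear maps to the extended spherical building (due to \cite{KM} and recalled in Section \ref{sec:class}). By this uniqueness, it suffices to prove that $\Phi_\nu$, as constructed via the link identification of Proposition \ref{prop-link-vertex}, coincides with the piecewise linear map assigned by the \cite{KM} classification to the toric vector bundle $\E|_{\X_{s,\nu}}$.

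The first step is to make the ``slope at $\nu$'' construction of $\Phi_\nu$ precise. For each polyhedron $\Delta$ of $\Sigma_1$ containing $\nu$, the restriction $\Phi_{\E,1}|_\Delta$ is affine with value $\Lambda = \Phi_{\E,1}(\nu)$ at $\nu$, so in a neighborhood of $\nu$ one may write
\[
\Phi_{\E,1}|_\Delta(x) \;=\; \Lambda + L_\Delta(x - \nu),
\]
where $L_\Delta$ is a linear map on the tangent cone $\sigma_\Delta \coloneqq \rec_\nu(\Delta)$ with values in the tangent cone at $\Lambda$ in $\wt{\B}_\aff(E)$. Proposition \ref{prop-link-vertex} identifies this tangent cone with the extended spherical building $\wt{\B}_\sph(E_\Lambda)$. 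Since the cones $\sigma_\Delta$ for $\Delta \ni \nu$ are precisely the cones of the star fan $\Sigma_1(\nu)$, the maps $L_\Delta$ glue into the claimed piecewise linear map $\Phi_\nu \colon |\Sigma_1(\nu)| \to \wt{\B}_\sph(E_\Lambda)$.

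The second step is to match local affine charts. For each $\Delta \ni \nu$, the intersection $U_\Delta \cap \X_{s,\nu}$ is the $T_s$-invariant affine chart $U_{\sigma_\Delta} \subset \X_{s,\nu}$ corresponding to $\sigma_\Delta \in \Sigma_1(\nu)$. On one hand, \cite{KMT} presents $\E|_{U_\Delta}$ explicitly as a $T$-equivariant $\O(U_\Delta)$-module built from $\Phi_{\E,1}|_\Delta$, and restriction to $U_{\sigma_\Delta}$ amounts to reducing modulo $\m$ and then projecting onto the component of the special fiber corresponding to $\nu$. On the other hand, \cite{KM} presents the vector bundle classified by $L_\Delta$ as a $T_s$-equivariant $\O(U_{\sigma_\Delta})$-module. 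The desired agreement reduces to comparing these two modules; since both are determined cone-by-cone from the same local data once Proposition \ref{prop-link-vertex} is invoked, the identification follows by unwinding the definitions and gluing over all $\Delta \ni \nu$.

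The main obstacle is precisely this unwinding: one must carefully match the two module-theoretic constructions so that the reduction of the level-$1$ additive norm $\Lambda$ to a spherical norm on $E_\Lambda \cong \L/\m\L$, encoded by directions at $\Lambda$ in the affine building, corresponds under restriction to $\X_{s,\nu}$ to the data used by the \cite{KM} classification. This requires simultaneously tracking (i) the passage from the tangent cone $\rec_\nu(\Delta)$ to the corresponding cone of the star fan $\Sigma_1(\nu)$, (ii) the reduction $\L \rightsquigarrow \L/\m\L$ of the $\O$-lattice attached to $\Lambda$, and (iii) the compatibility of the slope-at-$\nu$ operation on classifying maps with the closed immersion $\X_{s,\nu} \hookrightarrow \X_\Sigma$. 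Functoriality of the \cite{KMT} classification under closed $T$-invariant immersions and base change should make these three pieces cohere, yielding the theorem.
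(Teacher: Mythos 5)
Your skeleton is the same as the paper's: by uniqueness in the field classification of \cite{KM}, reduce to comparing, chart by chart, the local equivariant trivialization data of $\E|_{\X_{s,\nu}}$ with the piecewise linear data of $\Phi_\nu$ obtained through the link identification of Proposition \ref{prop-link-vertex}. The problem is that the step you yourself flag as the ``main obstacle'' --- matching the two module presentations --- is the entire content of the proof, and you do not carry it out. The appeal to ``functoriality of the \cite{KMT} classification under closed $T$-invariant immersions and base change'' is not available: Theorem \ref{th:vb-dvr} only records compatibility with restriction to the generic fiber, and compatibility with restriction to a component of the special fiber is exactly what Theorem \ref{th:link} asserts, so invoking such a functoriality here is circular.

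Concretely, two computations are missing. First, the explicit piecewise-linear data of $\Phi_\nu$: if on a polyhedron $\Delta \ni \nu$ the map $\Phi_1$ is adapted to an $\O$-basis $B_\Delta$ of $\Lambda$ with characters $u(\Delta)$, then via Lemma \ref{lem-w-w-Lambda} and Equation \eqref{equ-w-vs-bar-w} (that is, $\w(b_i) = \overline{\w}(\overline{b}_i)$) the map $\Phi_\nu$ on the cone $\Delta_\nu \in \Sigma_1(\nu)$ is adapted to the reduced basis $\overline{B}_\Delta \subset E_\Lambda$ with the \emph{same} characters $u(\Delta)$; this is Proposition \ref{prop-Phi-nu-PL}, and your ``slope $L_\Delta$'' formulation never extracts this data, which is what the comparison needs. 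Second, the bundle side: $H^0(\U_\Delta, \E|_{\U_\Delta}) = R_\Delta \otimes_\O \Lambda = \bigoplus_i R_\Delta b_{\Delta,i}$, and the chart $\overline{\U}_{\nu,\Delta} \subset \X_{s,\nu}$ is cut out by the ring map sending $\chi^u \varpi^k$ to $\chi^u$ when $k = -\langle u, \nu\rangle$ and to $0$ when $k > -\langle u, \nu\rangle$ (not merely ``reduce mod $\m$ and project''), whence $H^0(\overline{\U}_{\nu,\Delta}, \E|_{\overline{\U}_{\nu,\Delta}}) = \bigoplus_i \overline{R}_{\nu,\Delta}\, \overline{b}_{\Delta,i}$ with $T_s$-weights $u(\Delta)$. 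Only with both computations in hand does the agreement with the \cite{KM} trivialization data become an identity rather than an expectation; as written, the core of the argument is deferred to an unproved compatibility.
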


\begin{thm}\label{th:intro}
    Let $\E$ be a toric vector bundle on a toric scheme $\X_\Sigma$. Let $c_i^T(\E)$ denote the $i$-th equivariant Chern class of $\E$ represented by a piecewise polynomial function $(f_\nu)_{\nu}$, where $\nu$ runs over the vertices in the polyhedral complex $\Sigma_1$. Then the piecewise polynomial function $f_\nu$ on the fan $\Sigma_1(\nu)$, is given by $\epsilon_i \circ \Phi_\nu$. 
\end{thm}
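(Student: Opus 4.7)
The plan is to reduce the theorem to the already established field case (Theorem \ref{th-equiv-Chern-tvb-field}) by decomposing the equivariant Chow cohomology of the special fiber component by component, and then applying Theorem \ref{th-intro1}.

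First, by Definition \ref{def:eq-chern}, one has $c_i^T(\E)=c_i^{T_s}(\iota^*\E)\in\CH^i_{T_s}(\X_s)$. Since the special fiber $\X_s$ is the union of the toric varieties $\X_{s,\nu}$ indexed by the vertices $\nu$ of $\Sigma_1$, I would next unpack the description of $\CH^*_{T_s}(\X_s)$ from Section \ref{sec:equiv-chow} as tuples of piecewise polynomial functions $(f_\nu)_\nu$ with compatibility conditions. Under this description, $f_\nu$ should be identified as the image of the class under the restriction map $\CH^i_{T_s}(\X_s)\to\CH^i_{T_s}(\X_{s,\nu})$, which via Payne's theorem (Theorem \ref{th:payne-chow}) is the space of piecewise polynomial functions of degree $i$ on the star fan $\Sigma_1(\nu)$. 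A short lemma chasing through the construction in Section \ref{sec:equiv-chow} should make this identification precise.

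With this identification in hand, I would use the fact that equivariant Chern classes commute with pullback along $T$-equivariant morphisms. Writing $\iota_\nu\colon\X_{s,\nu}\hookrightarrow\X_s$ for the component inclusion, $f_\nu$ then represents the class $\iota_\nu^* c_i^{T_s}(\iota^*\E)=c_i^{T_s}(\iota_\nu^*\iota^*\E)=c_i^{T_s}(\E|_{\X_{s,\nu}})$. So the problem reduces to computing the $i$-th equivariant Chern class of the toric vector bundle $\E|_{\X_{s,\nu}}$ on the $T_s$-toric variety $\X_{s,\nu}$ over the residue field. At this point I would invoke Theorem \ref{th-intro1}, which identifies the piecewise linear map classifying $\E|_{\X_{s,\nu}}$ with the map $\Phi_\nu\colon|\Sigma_1(\nu)|\to\wt{\B}_\sph(E_\Lambda)$ obtained by linking $\Phi_{\E,1}$ at $\nu$. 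Feeding this into Theorem \ref{th-equiv-Chern-tvb-field} immediately yields that the $i$-th equivariant Chern class of $\E|_{\X_{s,\nu}}$ is represented by the piecewise polynomial function $\epsilon_i\circ\Phi_\nu$, giving $f_\nu=\epsilon_i\circ\Phi_\nu$ as desired.

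The main technical obstacle I anticipate is the first step: verifying both the identification of $f_\nu$ with the restriction to $\X_{s,\nu}$ and the compatibility of the family $(\epsilon_i\circ\Phi_\nu)_\nu$ under the gluing conditions built into Section \ref{sec:equiv-chow}. The compatibility should ultimately follow from the fact that $\Phi_{\E,1}$ is a single globally defined piecewise affine map on $|\Sigma_1|$, so the linkings at adjacent vertices $\nu,\nu'$ agree along the edge joining them; functoriality of the elementary symmetric functions $\epsilon_i$ under the natural inclusions of extended spherical buildings attached to faces of $\wt{\B}_\aff(E)$ should then encode exactly the gluing data required along each intersection $\X_{s,\nu}\cap\X_{s,\nu'}$. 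Carefully tracking these identifications for each codimension-one incidence in the special fiber is likely to be the most delicate bookkeeping in the argument, but is otherwise routine once the field-case result and Theorem \ref{th-intro1} are granted.
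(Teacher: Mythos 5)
Your proposal is correct and follows essentially the same route as the paper, which deduces the result directly from Theorem \ref{th:link} (identifying $\Phi_\nu$ as the classifying map of $\E|_{\X_{s,\nu}}$) together with the field-case Theorem \ref{th-equiv-Chern-tvb-field}. The extra steps you spell out --- that $f_\nu$ is the component of the class under the injection $\CH^i_{T_s}(\X_s)\hookrightarrow \CH^i_{T_s}\bigl(\X_s^{[1]}\bigr)$, that Chern classes commute with the pullback $\iota_\nu^*$, and that the gluing conditions hold because $\Phi_{\E,1}$ is globally defined --- are exactly what the paper leaves implicit in calling the claim an immediate corollary.
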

One can also give a description of the equivariant Chern classes of the toric vector bundle restricted to the generic fiber (see Corollary \ref{cor:recovering}). 

In the last section \ref{sec:arakelov}, we motivate our definition of equivariant Chern classes from an arithmetic perspective and establish connections with the non-Archimedean Arakelov theory of toric varieties. More specifically, the non-Archimedean analogue of a smooth toric metric on a toric vector bundle $\E$ on a toric variety defined over a non-Archimedean field $K$ is the choice of a model $\X$ and an extension $\wt{\E}$ of $\E$ to $\X$. Metrics arising in this way are called (toric) \emph{model} metrics. 

Hence, equivariant Chern classes of toric vector bundles over a toric scheme over a DVR are the non-Archimedean analogues of the (complex differential) Chern forms of hermitian vector bundles. One can then consider more general continuous toric metrics by considering uniform limits of toric model metrics. Generalizing the case of line bundles studied in \cite{BPS}, we expect, under some positivity assumption, a classification of continuous toric metrics on toric vector bundles in terms of piecewise affine maps to the (extended) Bruhat--Tits buildings satisfying some asymptotic conditions. The following is Conjecture \ref{conj:class-metrics} in the text. 
\begin{conj}
 Let $\E$ be a rank $r$-toric vector bundle over a complete toric variety $X$ over the discretely valued field $K$. Write $\E^{\on{an}}$ for the analytifiaction of $\E$ in the sense of Berkovich. Then there is a bijection between the space of semipositive toric metrics on $\E^{\on{an}}$ and the space of ``concave'' piecewise affine functions $N_{\R}  \to \wt{\B}_{\aff}(E)$ satisfying an asymptotic growth condition with respect to the piecewise linear map $\Phi_{\E} \colon N_{\R} \to \wt{\B}_{\sph}(E)$ corresponding to $\E$.
\end{conj}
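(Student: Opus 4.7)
The plan is to adapt the strategy of \cite{BPS} from line bundles to toric vector bundles, handling toric \emph{model} metrics first (where Theorem~\ref{th:vb-dvr} applies directly), and then extending to all continuous semipositive toric metrics by a uniform-limit argument. Since this is a conjecture rather than an established theorem, the outline below is necessarily speculative, and several of the notions involved would need to be pinned down carefully as part of the proof.

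First, for a toric model metric on $\E^{\on{an}}$, by definition there is a proper toric model $\X_\Sigma$ of $X$ and an extension $\wt{\E}$ of $\E$ to $\X_\Sigma$ producing the metric. By Theorem~\ref{th:vb-dvr}, the extension $\wt{\E}$ corresponds to a piecewise affine map $\Phi_{\wt{\E},1}\colon|\Sigma_1|\to\wt{\B}_{\aff}(E)$, and since $\X_\Sigma$ is proper, $|\Sigma_1|=N_\R$. This is the candidate map associated to the metric. The first key step is to formulate the correct notion of \emph{concavity} for piecewise affine maps $N_\R\to\wt{\B}_{\aff}(E)$: since the building is not itself a vector space, the natural candidate is to declare $\Phi$ concave if, on each apartment $\wt{A}(B)\simeq\R^r$ the restriction is concave in the componentwise partial order, or equivalently (and perhaps more invariantly), if each composition $\epsilon_i\circ\Phi$ is concave as an $\R$-valued piecewise polynomial function on $N_\R$. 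This latter formulation connects directly with Theorem~\ref{th:intro} and should correspond to nefness of the equivariant Chern classes of $\wt{\E}$, which in turn should match semipositivity of the induced toric model metric.

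Second, the asymptotic growth condition is expected to encode the recession behavior at infinity: a piecewise affine map $\Phi\colon N_\R\to\wt{\B}_{\aff}(E)$ has a natural recession map $\rec(\Phi)\colon N_\R\to\wt{\B}_{\sph}(E)$, and the requirement is that $\rec(\Phi)=\Phi_\E$, the piecewise linear map classifying the toric vector bundle $\E$ on the generic fiber. This compatibility is forced in the model case because $\wt{\E}_\eta=\E$, and together with properness of $\X_\Sigma$ it should force the recession of $\Phi_{\wt{\E},1}$ to reproduce $\Phi_\E$. The third step is the limit argument: continuous toric metrics are uniform limits of toric model metrics, and one needs a matching topology on the space of piecewise affine maps to $\wt{\B}_{\aff}(E)$ (for instance uniform convergence of each $\epsilon_i\circ\Phi$ on compacts) in which the correspondence on the model level is a homeomorphism onto its image. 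Then both sides of the conjectured bijection are obtained as closures of corresponding subsets of model-level objects, and the bijection extends by continuity.

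The main obstacle will be twofold. On the definitional side, pinning down the right notion of concavity so that it is apartment-invariant, manifestly compatible with the $\epsilon_i$, and coincides with nefness of $\wt{\E}$ at the model level is delicate; the spherical recession alone is not enough, and one must ensure that piecewise affine maps whose restrictions to apartments are concave truly correspond to globally \emph{semipositive} metrics in the Berkovich-analytic sense. On the surjectivity side, the hard step is to produce, from an arbitrary concave piecewise affine map with prescribed recession $\Phi_\E$, a toric model $\X_\Sigma$ together with a \emph{nef} equivariant extension $\wt{\E}$ realizing it. Here Theorem~\ref{th:intro} should play a crucial role, since it permits translating the concavity of $\epsilon_i\circ\Phi$ directly into positivity statements for the equivariant Chern classes $c_i^T(\wt{\E})$, which is what one would need to invoke a suitable equivariant Kleiman/Nakai--Moishezon criterion to conclude semipositivity of the induced metric. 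Closing this loop rigorously, rather than for line bundles where concavity of a single function suffices, is exactly what separates the vector bundle case from the classical one.
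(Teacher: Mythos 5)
The statement you are addressing is Conjecture \ref{conj:class-metrics}: the paper does not prove it, and in fact explicitly says that the appropriate notion of ``concavity'' and the precise asymptotic condition are \emph{yet to be determined}. So there is no proof in the paper to compare against; what you have written is a strategy sketch for an open problem, and it closely mirrors the motivation the authors themselves give (adapt \cite{BPS} from line bundles, use Theorem \ref{th:vb-dvr} to encode a model $\wt{\E}$ as a piecewise affine map with $|\Sigma_1|=N_\R$, use the Chern class description of Theorem \ref{th:equi} to detect positivity, and pass to uniform limits). As a proof, however, it has genuine gaps, which you partly acknowledge but do not close.

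Concretely: (1) The definitional step is not resolved. You offer two candidate notions of concavity (apartment-wise concavity in the componentwise order, versus concavity of each $\epsilon_i\circ\Phi$) without showing they are equivalent, apartment-independent, or invariant under the relevant symmetries; since the conjecture is not even well-posed until this is fixed, nothing downstream can be rigorous. (2) The bridge from concavity to semipositivity is asserted, not argued. Semipositivity of a metric on $\E^{\on{an}}$ is defined via the line bundle $\O_{\E}(1)$ on $\P(\E)$, so the real comparison must happen on the projectivized bundle; your proposal works entirely with data on $N_\R$ and the equivariant Chern classes $c_i^T(\wt{\E})$, and positivity of Chern classes does not characterize nefness of a higher-rank bundle (nef bundles have nef Chern classes, but the converse fails), so the proposed ``equivariant Kleiman/Nakai--Moishezon'' step would not go through as stated. (3) The surjectivity direction --- producing, from an arbitrary concave piecewise affine map with recession $\Phi_\E$, a toric model $\X_\Sigma$ and a nef extension $\wt{\E}$ realizing it --- is exactly the hard construction and is left entirely open. (4) The limit argument presupposes that semipositive toric metrics are uniform limits of semipositive toric \emph{model} metrics and that the model-level correspondence is injective and bicontinuous for a topology you only gesture at; none of this is established, and even in the line bundle case of \cite{BPS} these points require substantial work. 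In short, your outline is a reasonable research plan consistent with the paper's own expectations, but it is not a proof, and the paper offers none either.
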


\section{Notation}
\begin{itemize}
\item[-]$(K, \val) =$ field with discrete valuation $\val$ with values in $\ov{\Z} = \Z \cup \{\infty\}$
\item[-] $\O =$ the valuation ring of $(K, \val)$
\item[-] $\m =$ the maximal ideal of $\O$
\item[-] $\varpi =$ a uniformizer of $\O$
\item[-] $\k =$ the residue field of $\O$
\item[-] $S = \Spec(\O)=$ Spectrum of the valuation ring
\item[-]$\eta$ and $s$ for generic and special points of $S$, respectively
\item[-] $\X_{\eta}$ and $\X_s$ for the generic and special fiber of a scheme $\X$ over $S$ respectively

\item[-]$\Sigma =$ fan in $N_\R \times \R_{\geq 0}$ (not necessarily complete)
\item[-] $\Sigma_1 =$ polyhedral complex in $N_\R \times \{1\}$ obtained by intersecting cones in $\Sigma$ with $N_\R \times \{1\}$
\item[-] $\Sigma_0 =$ fan in $N_\R \times \{0\}$ obtained by taking cones in $\Sigma$ that lie in $N_\R \times \{0\}$ (recall that if $\Sigma$ is complete then $\Sigma_1$ is a complete polyhedral complex and $\Sigma_0$ is its recession fan).
\item[-]$\Sigma_i(\ell) =$ set of $\ell$-dimensional polyhedra in $\Sigma_i$ for $i = 0,1$
\item[-] $\Sigma_i(\gamma)=$ star of $\Sigma_i$ at $\gamma \in \Sigma_i$, for $i = 0,1$  

\item[-] $\U_{\sigma} =$ affine toric scheme associated to a cone $\sigma \in \Sigma$
\item[-] $\X_{\Sigma} =$ toric scheme associated to the fan $\Sigma$
\item[-] $x_0 =$ the distinguished point in the open orbit $\U_0$ corresponding to $1 \in T_{\eta}$

\item[-] $\E$= toric vector bundle over $\X_{\Sigma}$ of rank $r$
\item[-] $E = \E_{x_0} \cong K^r =$ fiber of $\E$ over the distinguished $K$-point $x_0$

\item[-] $\wt{\B}_{\aff}(E) =$ space of all additive norms on $E$, we also refer to it as the extended Bruhat--Tits building of $\GL(E)$
\item[-] $\wt{\B}_{\sph}(E) =$ space of all valuations on $E$, we also refer to it as the extended Tits building of $\GL(E)$ 
\item[-] $\wt{\B}_m(E)=$ level $m$ extended building, it consists of functions $\w: E \to \ov{\R}$ which satisfy:
\begin{enumerate}
\item[(1)] non-Archimedean inequality
\item[(2)] $\w(\lambda e) = m\lambda\val(e) + \w(e)$, $\forall e \in E$, $\forall \lambda \in K.$
\item[(3)] $\w(e) = \infty$ iff $e = 0.$

\end{enumerate}
Then, $\wt{\B}_0 = \wt{\B}_{\sph}(E)$ and $\wt{\B}_1(E) = \wt{\B}_{\aff}(E)$
\item[-]$\wt{\B}(E) = \bigcup_{m \geq 0} \wt{\B}_m(E) =$ total extended building

\item[-] $\Phi: |\Sigma| \to \wt{\B}(E) =$ a graded piecewise LINEAR map from $\Sigma$ to the total extended building.

\item[-] $\Phi_1: |\Sigma_1| \to \wt{\B}_\aff(E) =$ the piecewise AFFINE map from the polyhedral complex $\Sigma_1$ to the extended affine building (obtained from $\Phi$).

\item[-] $\Phi_0: |\Sigma_0| \to \wt{\B}_\sph(E) =$ the piecewise linear map from the fan $\Sigma_0$ to the extended spherical building (obtained from $\Phi$).

\item[-] $\CH^k_G(X)=$ equivariant operational $k$-th Chow cohomology ring of a scheme $X$ with a $G$-action.
\end{itemize}
\section{Toric schemes over a DVR}\label{sec:toric-schemes}
Let $\O$ be a discrete valuation ring with fraction field $K$. We denote by $\varpi$ a uniformizer for $\O$, i.~e. a generator of the principal maximal ideal $\m$ of $\O$, and we let $\k = \ca O/\m$ be the residue field. We let $s$ and $\eta$ denote the special and generic points of $S=\Spec(\ca O)$, corresponding to the maximal ideal $\m$ and the prime ideal $(0)$, respectively. For a scheme $\X$ over $S$ we denote by $\X_s = \X \times_{S}\Spec(\k)$ and by $\X_{\eta} = \X \times_S\Spec(K)$ the special and the generic fiber, respectively.

We let $M$ and $N$ be dual free abelian groups of rank $n$, and we set $\widetilde{N} = N \times \Z$ and $\widetilde{M} = M \times \Z$. Let $T$ denoted a split torus over $S$. We denote by $T_{\eta}$ and by $T_s$ the base change to $\operatorname{Spec}(K)$ and to $\operatorname{Spec}(\k)$, respectively. We suppose that $T_{\eta} \simeq (K^{\times})^n$ has character lattice $M$.

A \emph{toric scheme} $\X$ over $S$ of relative dimension $n$ is a normal integral separated scheme of finite type, equipped with a dense open embedding $T_{\eta} \hookrightarrow \mathcal{X}_{\eta}$ and an $S$-action of $T$ over $\mathcal{X}$ that extends the action of $T_{\eta}$ on itself by translations. The image of the identity point of $T_{\eta}$ in $\X$ is denoted by $x_0$. 

Note that if $\mathcal{X}$ is a toric scheme over $S$, then $\mathcal{X}_{\eta}$ is a toric variety over $K$ with torus $T_{\eta}$.

We now recall briefly the combinatorial classification of toric schemes over $S$ in terms of complete, strongly convex, rational polyhedral complexes (we refer to \cite[Section~3.5]{BPS} for details on the classification and to \cite[Section~2.1]{BPS} for definitions regarding strongly convex, rational polyhedral complexes and fans).

From now on, when we say cone/polyedron/fan/polyhedral complex, we mean always a strongly convex, rational cone/polyedron/fan/polyhedral complex. And for a cone we also always mean a polyhedral cone. 

Let $\Sigma$ be a fan in $N_{\R} \times \R_{\geq 0}$. We denote by $\Sigma_1$ the polyhedral complex in $N_{\R} \times \{1\}$ obtained by intersecting all the cones in $\Sigma$ by $N_{\R} \times \{1\}$. 

Similarly, we denote by $\Sigma_0$ the fan in $N_{\R} \times \{0\}$ obtained by intersecting all the cones in $\Sigma$ by $N_{\R} \times \{0\}$.

\begin{rem}
    Under some conditions, which hold for example when $\Sigma$ is complete in $N_{\R} \times \R_{\geq 0}$, the fan $\Sigma_0$ coincides with the \emph{recession fan of} $\Sigma_1$
\[
\rec(\Sigma_1) = \left\{\rec(\Delta) \times \{0\} \; | \; \Delta \in \Sigma_1\right\}.
\]
In this case, let $c(\Sigma_1)$ be the polyhedral complex in $N_{\R} \times \R_{\geq 0}$ consisting of the cones over the polyhedra in $\Sigma_1$ together with its recession cones, i.~e.
\[
c(\Sigma_1) \coloneqq \left\{\cone(\Delta) \; | \; \Delta \in \Sigma_1\right\} \cup \rec(\Sigma_1).
\]
This is a fan in $N_{\R} \times \R_{\geq 0}$ and the above construction is bijective, i.\,e. we get $\Sigma = c(\Sigma_1)$. Strong convexity and rationality are also preserved. In particular, we obtain a bijective correspondence between \emph{complete} polyhedral complexes in $N_{\R}\times \{1\}$ and \emph{complete} fans in $N_{\R} \times \R_{\geq 0}$. Note that in general this is not true (see \cite{BS} for an example). 
\end{rem}

We now explain how to construct a toric scheme from a fan $\Sigma$ in $N_{\R} \times \R_{\geq 0}$. This is done as in the case of toric varieties over a field, by first associating an affine toric scheme $\X_{\sigma}$ to a cone $\sigma \in \Sigma$ and then applying a gluing construction. 

Let $\sigma \subset N_{\R} \times \R_{\geq 0}$ be a cone in $\Sigma$. We denote by $\sigma^{\vee} \subset \wt{M}_{\R}$ its dual cone. Let $R_{\sigma}$ be the subring of the ring of Laurent polynomials $K[T_{\eta}]$ defined by 
\[
R_{\sigma} = \ca O\left[\chi^u \varpi^k \; | \; (u,k) \in \sigma^{\vee}\cap \widetilde{M} \right],
\]
and let $\U_{\sigma} \coloneqq \Spec(R_{\sigma})$. When $\sigma$ is the cone over a polyhedron $\Delta$ in $N_{\R} \times \{1\}$, we also denote $R_{\sigma}$, $\U_\sigma$ by $R_{\Delta}$, $\U_\Delta$ respectively. 

As usual, one constructs the toric scheme $\X_{\Sigma}$ by gluing the affine schemes $\R_{\sigma}$ for $\sigma \in \Sigma$. If $\Sigma = c(\Sigma_1)$ for some complete polyhedral complex $\Sigma_1$ in $N_{\R}\times \{1\}$, then we also denote the toric scheme $\X_{\Sigma}$ by $\X_{\Sigma_1}$. 

We now gather the main properties of toric schemes. We refer to \cite[Chap IV]{KKMD} and to \cite[Section~3.5]{BPS} for proofs and further details.

\begin{enumerate}
\item The association $\sigma \mapsto \U_{\sigma}$ gives an equivalence of categories between the category of cones in $N_{\R}\times \R_{\geq 0}$ and the category of affine toric schemes over $S$. 
\item The association $\Sigma \mapsto \X_{\Sigma}$ gives an equivalence of categories between fans in $N_{\R} \times \R_{\geq 0}$ and toric schemes over $S$. Moreover, $\X_{\Sigma}$ is proper and/or regular if and only if $\Sigma$ is complete and/or regular. $\Sigma$ is said to be regular if any cone is generated by a subset of a basis of $N \times \Z$. 
\item There are two different types of cones in $\Sigma$: the ones that are contained in $N_{\R} \times \{0\}$ and the ones that are not. The former are in bijective correspondence with the orbits of $T_{\eta}$ in $\X_{\Sigma,\eta}$. In particular, the generic fiber $\X_{\Sigma,\eta}$ is isomorphic to $X_{\Sigma_0}$, the toric variety over $K$ corresponding to $\Sigma_0$. The cones that are not contained in $N_{\R} \times \{0\}$ are of the form $\cone(\Delta)$ for some $\Delta \in \Sigma_1$ and are in bijective correspondence with the orbits of $T_{s}$ in the special fiber $\X_{\Sigma,s}$. Note that the special fiber $\X_{\Sigma,s}$ has an induced action by $T_{s}$ but, in general, it is not a toric variety over $\k$ (it is not necessarily irreducible nor reduced). It is reduced if and only if the vertices of all $\Delta \in \Sigma_1$ are in $N \times \{1\}$. In general, it is a union of toric varieties (with multiplicities) corresponding to polyhedra in $\Sigma_1$, which intersect according to the combinatorics of $\Sigma_1$. The correspondence is order reversing. In particular, vertices in $\Sigma_1$ correspond to irreducible components of the special fiber and two such components intersect if and only if there is an polyhedron in $\Sigma_1$ containing both vertices.  

\end{enumerate}

\begin{defn}\label{def:vertical-cycles}
Let $\Sigma$, $\Sigma_1$ and $\Sigma_0$ be as above. 
    \begin{enumerate}
    \item  For $\sigma \in \Sigma_0$ we denote by $O_{\sigma}\subset \X_{\Sigma,\eta}$ the corresponding $T_{\eta}$-orbit and by $V(\sigma)$ its Zariski closure in $\X_{\Sigma}$. By \cite[Proposition~3.5.7]{BPS}, this is again a toric scheme over $S$ of relative dimension $\on{codim}(\sigma)$. 
    \item For $\Delta \in \Sigma_1$ we denote by $O_{\Delta}\subset \X_{\Sigma,s}$ the corresponding $T_{s}$-orbit and by $V(\Delta)$ its Zariski closure in $\X_{\Sigma}$, which is contained in the special fiber $\X_{\Sigma,s}$. In the reduced case, $V(\Delta)$ is a toric variety over $\k$ of dimension $\on{codim}(\Delta)$ with corresponding fan $\Sigma_1(\Delta)$, the \emph{star} of $\Sigma_1$ at $\Delta$ (see \cite[Proposition 3.5.8]{BPS}).  
    \end{enumerate}
\end{defn}

\section{Equivariant Chow groups of special fibers of toric schemes}\label{sec:equiv-chow}
Given an algebraic group $G$ and a scheme $X$ with a $G$-action we consider the equivariant operational Chow cohomology groups $\CH^*_G(X)$ from \cite{EG}. We will be only interested in the case where $G$ is a torus and where $X$ is either a toric variety over a field or the special fiber of a toric scheme over $S$. In the former case, we have the following description given by Payne of the equivariant operational Chow cohomology ring (see \cite{Payne-equi}).
\begin{thm}\label{th:payne-chow}
    Let $X_{\Omega}$ be a complete toric variety over a field $F$, with torus $T_F$, corresponding to a complete fan $\Omega$ in $N_{\R}$. Let $\PP^*(\Omega)$ be the graded algebra of piecewise polynomial functions on $N_{\R}$ with respect to $\Omega$. There is a natural isomorphism of graded rings 
    \[
    \CH^*_{T_F}(X_{\Omega}) \simeq \PP^*(\Omega).
    \]
\end{thm}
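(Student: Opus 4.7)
The plan is to construct an explicit natural graded ring homomorphism $\phi\colon \PP^*(\Omega) \to \CH^*_{T_F}(X_{\Omega})$ and then verify it is an isomorphism by first handling the smooth case and then reducing the general case to it. To construct $\phi$, note that a piecewise polynomial $f \in \PP^*(\Omega)$ is the data of polynomials $f_\sigma \in \on{Sym}^*(M)$, one for each maximal cone $\sigma \in \Omega$, which agree on shared faces. Since $\CH^*_{T_F}$ of each $T_F$-fixed point $x_\sigma$ equals $\on{Sym}^*(M)$, and since an operational class on $X_\Omega$ is determined by compatible classes on the fixed loci of all toric subvarieties $V(\tau)$, the coherence condition on $f$ is precisely what is needed to package the $f_\sigma$ into a well-defined operational equivariant Chow class. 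Multiplicativity of $\phi$ is visible because both sides are computed pointwise in $\on{Sym}^*(M)$ at the fixed points.

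For a smooth complete fan $\Omega'$, $X_{\Omega'}$ is equivariantly formal, so by Brion's equivariant localization theorem $\CH^*_{T_F}(X_{\Omega'})$ embeds into $\prod_{\sigma \text{ maximal}} \on{Sym}^*(M)$ as the subring cut out by GKM-type congruences along the $T_F$-invariant $\mathbb{P}^1$'s indexed by codimension one cones of $\Omega'$. A direct identification of these congruences with the piecewise polynomial gluing condition across codimension one walls shows that $\phi$ is an isomorphism in the smooth case. For a general fan $\Omega$, pick a smooth refinement $\Omega'$, giving a proper birational toric morphism $\pi\colon X_{\Omega'} \to X_\Omega$. Kimura's theorem then presents $\CH^*_{T_F}(X_\Omega)$ as the equalizer of the two pullback maps from $\CH^*_{T_F}(X_{\Omega'})$ to $\CH^*_{T_F}(X_{\Omega'} \times_{X_\Omega} X_{\Omega'})$. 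Transported through the smooth case identification, this equalizer is exactly the subset of $\PP^*(\Omega')$ consisting of functions on $|\Omega'| = |\Omega|$ that are polynomial on each cone of the coarser fan $\Omega$, which is $\PP^*(\Omega)$.

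The main obstacle is the analysis of the fiber product $X_{\Omega'} \times_{X_\Omega} X_{\Omega'}$, which in general is neither reduced nor irreducible, and the verification that the Kimura equalizer condition collapses to exactly the piecewise polynomial condition on the coarser fan. Concretely, one must cover this fiber product by smooth equivariant envelopes (taking a further toric resolution if needed), compute its equivariant Chow cohomology through the smooth case, and then show face by face that the difference of the two pullbacks vanishes on a piecewise polynomial on $\Omega'$ precisely when its restriction to each cone of $\Omega$ is a single polynomial. Handling this carefully, and ensuring functoriality of operational cohomology under proper push-forward is compatible with the localization identifications, is the technical heart of the argument.
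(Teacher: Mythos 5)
Note first that the paper does not prove this statement at all: it is quoted from Payne \cite{Payne-equi} (and the smooth analogue from Brion), so your proposal has to be measured against Payne's argument rather than anything in the text. Your overall strategy --- smooth case plus Kimura's envelope/equalizer theorem applied to a toric resolution --- is in fact the same general route as Payne's, but as written there are genuine gaps. The most serious one is your construction of $\phi\colon \PP^*(\Omega)\to \CH^*_{T_F}(X_\Omega)$. You justify it by saying that an operational class is ``determined by compatible classes on the fixed loci''; but injectivity of restriction to torus-fixed points for a possibly singular complete toric variety is itself a nontrivial localization statement (in Payne's paper it is a \emph{corollary} of this theorem, not an input), and even granting injectivity, being determined by its restrictions does not let you manufacture an operational class from a prescribed compatible tuple --- that surjectivity is essentially the theorem you are trying to prove. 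So the construction of $\phi$ is circular. The natural well-defined map goes the other way: restrict an operational class to each invariant affine chart $\U_\sigma$, identify $\CH^*_{T_F}(\U_\sigma)$ with the ring of polynomial functions on the linear span of $\sigma$, and check compatibility along faces; this gives $\CH^*_{T_F}(X_\Omega)\to\PP^*(\Omega)$ with no appeal to completeness, fixed points, or localization, and is how the comparison map is set up in the cited source.

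The second gap is that the step you yourself call the technical heart --- computing the equivariant Chow cohomology of $X_{\Omega'}\times_{X_\Omega}X_{\Omega'}$ (a non-normal, possibly non-reduced scheme, which must itself be enveloped by toric varieties) and showing that the Kimura equalizer inside $\PP^*(\Omega')$ is exactly the set of functions polynomial on each cone of $\Omega$ --- is precisely the content of the theorem beyond the smooth case, and your sketch defers it rather than carries it out; in particular nothing in the proposal indicates why the equalizer condition forces a piecewise polynomial on $\Omega'$ to be given by a single polynomial across the walls of $\Omega'$ interior to a cone of $\Omega$. Two further cautions: Kimura's theorems are non-equivariant, so they must be run through the Edidin--Graham finite-dimensional approximations, and one must actually verify that a proper birational toric morphism is an envelope (Payne proves the equivariant version of this); and with integer coefficients the GKM-congruence description you invoke in the smooth case does not follow from Brion's (rational) localization theorem --- for a smooth fan it is cleaner to use the Stanley--Reisner presentation of the equivariant Chow ring, which agrees integrally with piecewise polynomials. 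With the comparison map reversed and the equalizer analysis supplied, your outline would become an accurate reconstruction of the cited proof, but as it stands it assumes the key points.
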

We note that the above theorem for the usual equivariant cohomology ring of smooth complete toric varieties goes back to Brion (see \cite[Proposition 2.2]{Brion-PP}).

Now, let $\Sigma, \Sigma_1$ and $\Sigma_0$ as in the previous section and assume that $\Sigma$ is complete and regular. We also assume that the vertices of all $\Delta \in \Sigma_1$ are in the lattice $N \times \{1\}$. For an integer $\ell \in \Z_{\geq 0}$ we denote by $\Sigma_1(\ell)$ the set of $\ell$-dimensional polyhedra in $\Sigma_1$. Let $\X = \X_{\Sigma_1}$ be the corresponding proper, regular toric scheme. We now provide a combinatorial description of the equivariant Chow cohomology ring $\CH^*_{T_s}(\X_s)$ of the special fiber. 

Let $t = |\Sigma_1(0)|$, i.~e. the number of vertices of $\Sigma_1$. Recall that this equals the number of irreducible components of $\X_s$. 

For any $i \in \{1, \dotsc, t\}$ let $\X_s^{[i]}$ be the disjoint union of $i$-fold intersections of irreducible components of $\X_s$. Set also $\X_s^{[0]}=\X$ and $\X_s^{[t]}= \emptyset$ if $t >n$. 

We will only consider the cases $i = 0,1,2$. Using the correspondence between vertical cycles and polyhedra in $\Sigma_1$ we can write
\[
\X_s^{[1]} = \bigsqcup_{\nu \in \Sigma_1(0)}V(\nu) \quad \text{ and }\quad \X_s^{[2]} = \bigsqcup_{\stackrel{\gamma \in \Sigma_1(1)}{\gamma \text{ bounded}}}V(\gamma),
\]
where $V(\Delta)$ denotes the vertical cycle corresponding to $\Delta \in \Sigma_1$. As was recalled in Section~\ref{sec:toric-schemes}, this is a complete toric variety over $\k$ with corresponding fan $\Sigma_1(\Delta)$. 

By Payne's result we can identify $\CH_{T_s}^*\left(V(\Delta)\right)$ with the graded ring of piecewise polynomial functions $\PP^*(\Sigma_1(\Delta))$. On the other hand, by \cite[Proposition 4.2]{botero-equiv}, we have that for any $p \in \Z_{\geq 0}$ the equivariant operational Chow cohomology of the special fiber $\X_s$ fits into an exact sequence

\[
0 \to \CH^p_{T_{s}}\left(\X_s\right) \to \CH^p_{T_{s}}\left(\X_s^{[1]}\right) \xrightarrow{\rho} \CH^p_{T_{s}}\left(\X_s^{[2]}\right). 
\]
The description of the map $\rho$ is given explicilty in \cite[Section 4]{botero-equiv}. 

In order to describe the equivariant operational Chow ring of $\X_s$ we make the following definition. 
\begin{defn}\label{def:pp-poly}
    Let 
    \[
    \PP^*(\Sigma_1) \coloneqq \bigoplus_{p \geq 0}\PP^p(\Sigma_1)
    \]
    be the graded ring of piecewise polynomial functions on $\Sigma_1$ given as follows: for any $p \in \Z_{\geq 0}$, the space $\PP^p(\Sigma_1)$ consists of tuples of piecewise polynomial functions $\left(f_{\nu}\right)_{\nu \in \Sigma_1(0)}$ subject to the following conditions:
\begin{enumerate}
\item[(i)] $f_{\nu} \in \PP^p(\Sigma_1(\nu))$ for any vertex $\nu \in \Sigma_1(0)$, i.~e.~$f_{\nu}$ is a piecewise polynomial function of degree $p$ with respect to the fan $\Sigma_1(\nu)$. For any (full-dimensional) polyhedron $\Delta \in \Sigma_1$ containing a vertex $\nu \in \Sigma_1(0)$, denote by $f_{\nu,\Delta}$ the polynomial in $N_{\R}$ which represents the restriction of $f_{\nu}$ to $\Delta \in \Sigma_1(\nu)$.
\item[(ii)] For any (full-dimensional) polyhedron $\Delta \in \Sigma_1$ and any two vertices $\nu$ and $\nu'$ in $\Delta$, we have that $f_{\nu,\Delta}=f_{\nu',\Delta}$ as polynomials in $N_{\R}$.
\end{enumerate}
\end{defn}

\begin{rem} Note that the cone $C_{\nu}$ corresponding to $\Delta$ in $\Sigma_1(\nu)$ and the cone $C_{\nu'}$ corresponding to $\Delta$ in $\Sigma_1(\nu')$ are different cones. What we require is that the piecewise polynomial function $f_{\nu}$ restricted to $C_{\nu}$ and the piecewise polynomial function $f_{\nu'}$ restricted to $C_{\nu'}$ are represented by the same polynomial. 
\end{rem}
The following is \cite[Theorem 4.6]{botero-equiv}.
    \begin{thm}\label{th:cohomology-special}
    There is a natural isomorphism of graded rings 
    \[
    \CH_{T_s}^*(\X_s) \simeq \PP^*(\Sigma_1).
    \]
\end{thm}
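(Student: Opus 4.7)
The plan is to combine the exact sequence
\[
0 \to \CH^p_{T_s}(\X_s) \to \CH^p_{T_s}(\X_s^{[1]}) \xrightarrow{\rho} \CH^p_{T_s}(\X_s^{[2]})
\]
from \cite[Proposition~4.2]{botero-equiv} with Payne's theorem (Theorem~\ref{th:payne-chow}) applied componentwise. Using the decompositions
\[
\X_s^{[1]} = \bigsqcup_{\nu \in \Sigma_1(0)} V(\nu), \qquad \X_s^{[2]} = \bigsqcup_{\gamma} V(\gamma),
\]
with $\gamma$ ranging over bounded $1$-dimensional polyhedra, and the fact that each $V(\Delta)$ is a complete $T_s$-toric variety over $\k$ with fan $\Sigma_1(\Delta)$, Payne's theorem would provide canonical graded ring isomorphisms
\[
\CH^*_{T_s}(\X_s^{[1]}) \cong \bigoplus_{\nu \in \Sigma_1(0)} \PP^*(\Sigma_1(\nu)), \qquad \CH^*_{T_s}(\X_s^{[2]}) \cong \bigoplus_{\gamma} \PP^*(\Sigma_1(\gamma)).
\]
Since $\CH^*_{T_s}(\X_s) \cong \ker(\rho)$ by the exact sequence, the task reduces to identifying $\ker(\rho)$ with $\PP^*(\Sigma_1)$.

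The next step is to unwind $\rho$ combinatorially. For a bounded edge $\gamma$ with endpoints $\nu$ and $\nu'$, the $\gamma$-component of $\rho$ is the signed sum $i_{\nu,\gamma}^* - i_{\nu',\gamma}^*$ coming from the closed embeddings $i_{\nu,\gamma}\colon V(\gamma) \hookrightarrow V(\nu)$ (and symmetrically for $\nu'$). Under Payne's isomorphism, each pullback $i_{\nu,\gamma}^*\colon \PP^*(\Sigma_1(\nu)) \to \PP^*(\Sigma_1(\gamma))$ is the standard restriction of piecewise polynomial functions induced by the identification of $\Sigma_1(\gamma)$ with the star of $\Sigma_1(\nu)$ at the ray corresponding to $\gamma$. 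Consequently, a tuple $(f_\nu)_\nu$ lies in $\ker(\rho)$ if and only if, for every bounded edge $\gamma$ with endpoints $\nu, \nu'$, the restrictions of $f_\nu$ and $f_{\nu'}$ to $\Sigma_1(\gamma)$ coincide. Evaluating this on each full-dimensional cone of $\Sigma_1(\gamma)$ — that is, on each full-dimensional $\Delta \in \Sigma_1$ containing $\gamma$ — forces $f_{\nu,\Delta}$ and $f_{\nu',\Delta}$ to agree on a full-dimensional set, and hence as polynomials on $N_\R$. This is exactly condition~(ii) of Definition~\ref{def:pp-poly}; the converse inclusion is immediate. Because Payne's isomorphism and geometric restriction are ring maps, the resulting bijection is an isomorphism of graded rings.

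The \emph{main obstacle} will be the careful combinatorial verification that the geometric pullback $i_{\nu,\gamma}^*$ on equivariant Chow cohomology agrees, under Payne's isomorphism, with the naive restriction of piecewise polynomial functions from $\Sigma_1(\nu)$ to the star $\Sigma_1(\gamma)$; this is a compatibility of Payne's construction with closed embeddings of orbit closures. A secondary subtlety is that $\X_s^{[2]}$ records only bounded edges, so one must argue that edge-wise compatibility along bounded $\gamma$ is strong enough to yield the full condition (ii) for every pair of vertices of every full-dimensional $\Delta$; this follows from the rigidity of polynomials (agreement on a full-dimensional cone implies agreement on all of $N_\R$) together with the fact that condition~(ii) is formulated polyhedron-by-polyhedron, so a single bounded edge of $\Delta$ containing both $\nu$ and $\nu'$ in its closure already propagates the polynomial identity to all of $N_\R$.
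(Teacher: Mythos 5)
The paper itself does not prove this statement: it is quoted from \cite[Theorem 4.6]{botero-equiv}, and the text around it only assembles the ingredients (the exact sequence of \cite[Proposition 4.2]{botero-equiv}, Payne's theorem, and the decompositions of $\X_s^{[1]}$ and $\X_s^{[2]}$). Your proposal reconstructs exactly the argument the paper is pointing to, so in approach it matches the cited source. Note, however, that the ``main obstacle'' you isolate --- that $\rho$ is the signed difference of the pullbacks $i_{\nu,\gamma}^*$, and that under Payne's isomorphism these pullbacks become the naive restriction of piecewise polynomial data (for the full torus $T_s$, so that polynomial representatives on maximal cones are genuine polynomials on $N_\R$ and agreement on a full-dimensional cone forces equality) --- is precisely the content of \cite[Section 4]{botero-equiv}; within this paper that step would remain a citation in any case, so your sketch does not add an independent verification of it.

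One step of your argument does need repair. From $\ker(\rho)$ you only obtain $f_{\nu,\Delta}=f_{\nu',\Delta}$ when $\nu$ and $\nu'$ are the two endpoints of a bounded edge $\gamma\in\Sigma_1(1)$ contained in $\Delta$. Two vertices of a full-dimensional polyhedron $\Delta$ need not span an edge of $\Delta$, so your closing claim that ``a single bounded edge of $\Delta$ containing both $\nu$ and $\nu'$'' propagates the identity is false in general. The fix is standard but should be said: the graph whose vertices are the vertices of $\Delta$ and whose edges are the bounded edges of $\Delta$ is connected (the bounded subcomplex of a pointed polyhedron is connected; this is the point to watch when $\Delta$ is unbounded, since unbounded edges are not recorded in $\X_s^{[2]}$). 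Choosing a path $\nu=\nu_0,\nu_1,\dotsc,\nu_k=\nu'$ of vertices of $\Delta$ with consecutive vertices joined by bounded edges $\gamma_j\subset\Delta$, each $\gamma_j$ lies in $\Sigma_1$, $\Delta$ determines a maximal cone of $\Sigma_1(\gamma_j)$, and the edge condition gives $f_{\nu_j,\Delta}=f_{\nu_{j+1},\Delta}$ as polynomials on $N_\R$; transitivity then yields condition (ii) of Definition \ref{def:pp-poly}. With that amendment, and granting the cited description of $\rho$ and its compatibility with Payne's isomorphism, your identification of $\ker(\rho)$ with $\PP^*(\Sigma_1)$ as graded rings is correct.
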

\begin{rem}
        One can also provide a combinatorial description of the relative equivariant Chow cohomology $\CH^*_T(\X/S)$ of the toric scheme $\X$ over $S$ (see \cite[Section 3]{botero-equiv}), however we will not make use of this description here.
    \end{rem}

\section{Tits and Bruhat--Tits buildings of $\GL(r)$}\label{sec:buildings}
As before, let $\val: K \to \overline{\Z}$ be a discretely valued field with valuation ring $\mathcal{O}$ and residue field $\k$. Let $E$ be an $r$-dimensional vector space over $K$.

We consider two kinds of valuations on $E$: the ones that extend the trivial valuation on $K$, and the ones that extend $\val$. We incorporate both in the following definition.

\begin{defn}[Level $m$ additive norm]  \label{def-level-m-add-norm}
For $m \geq 0$, we call a function $\w: E \to \overline{\R} = \R \cup \{\infty\}$ a {\it level $m$ additive norm} if the following hold:
\begin{itemize}
	\item[(1)] For all $e \in E$ and $0 \neq \lambda \in K$ we have $\w(\lambda e) = m \val(\lambda) + \w(e)$. 
	\item[(2)] For all $e_1, e_2 \in E$, the non-Archimedean inequality $\w(e_1+e_2) \geq \min\{\w(e_1), \w(e_2)\}$ holds.
	\item[(3)] $\w(e) = \infty$ if and only if $e=0$. 
\end{itemize}
We refer to an additive norm of level $1$ simply as an \emph{additive norm},  and we call an additive norm of level $0$ a \emph{valuation}.
\end{defn}


\begin{defn}[Extended buildings] The following objects play a central role in the paper:
\begin{itemize}
\item[(a)] We denote the set of all additive norms on $E$ by $\wt{\B}_\aff(E)$ and call it the \emph{extended Bruhat--Tits building of $E$}. 
\item[(b)] We denote the set of all valuations on $E$ by $\wt{\B}_\sph(E)$ and call it the \emph{extended Tits building of $E$} (or the \emph{cone over the Tits building of $E$}).
\item[(c)] For $m \geq 0$, we denote the set of all level $m$ additive norms by $\wt{\B}_m(E)$. Clearly $\wt{\B}_1(E) = \wt{\B}_\aff(E)$ and $\wt{\B}_0(E) = \wt{\B}_\sph(E)$. We let $$\wt{\B}(E) = \bigcup_{m \geq 0} \wt{\B}_m(E),$$
and call it the \emph{total extended building} of $E$.
\end{itemize}
\end{defn}

\begin{rem}
We note that $\wt{\B}(E)$ comes with a natural action of the multiplicative semigroup $\R_{\geq 0}$: if $\w: E \to \overline{\R}$ is a level $m$ additive norm and $k \geq 0$ then the function $k \w$ is a level $km$ additive norm. 
\end{rem}

A \emph{frame} is a direct sum decomposition of $E = \sum_{i=1}^r L_i$ into one-dimensional subspaces $L_i$. In other words, frames correspond to vector space bases up to multiplication of basis elements by nonzero scalars. 

We say that a level $m$ additive norm $\w: E \to \overline{\R}$ is \emph{adapted} to a frame $L=\{L_1, \ldots, L_r\}$ for $E$ if the following holds. For any $e = \sum_i e_i$, $e_i \in L_i$, we have: 
$$\w(e) =  \min\{ \w(e_i) \mid i=1, \ldots, r\}.$$
In other words, if $B = \{b_1, \ldots, b_r\}$ is a basis with $b_i \in L_i$, then for any $e = \sum_i \lambda_i b_i$ we have: $$\w(e) = \min\{ m\val(\lambda_i) + \w(b_i) \mid i=1, \ldots, r\}.$$ That is, the additive norm $\w$ is determined by its values on the basis $B$. 

\begin{defn}[Extended apartment] \label{def-ext-apt}
Let $L = \{L_1, \ldots, L_r\}$ be a frame for $E$. We denote the collection of all level $m$ additive norms that are adapted to $L$ by $\wt{A}_m(L)$ and call it the extended level $m$ apartment of $L$. We put $\wt{A}(L) = \bigcup_{m \geq 0} \wt{A}_m(L)$. We also set 
$\wt{A}_\sph(L) = \wt{A}_0(L)$ and $\wt{A}_\aff(L) = \wt{A}_1(L)$.
If $B$ is a basis for $E$ spanning a frame $L$, we also denote the extended apartments corresponding to $L$ by $\wt{A}(B)$, $\wt{A}_\sph(B)$ and $\wt{A}_\aff(B)$ respectively.
\end{defn}

\begin{rem}  \label{rem-ext-bldg-axioms}
One shows that, for any $m>0$, any two level $m$ additive norms lie in the same extended apartment (this is in fact, one of the defining axioms of an abstract building). In particular, $\wt{\B}_m(E)$ is a union of extended apartments $\wt{A}_m(L)$ (see \cite[Proposition 1.21]{RTW}).
\end{rem}

\begin{defn}
An $\O$-lattice in $E$ is a full rank $\O$-submodule $\Lambda \subset E$.
\end{defn}
The collection of all $\O$-lattices in $E$ is usually called the \emph{affine Grassmannian} of $\GL(E)$ and plays an important role in representation theory. To every $\O$-lattice $\Lambda$ there corresponds a $\Z$-valued additive norm $\w_\Lambda: E \to \overline{\Z}$ defined by $\w_\Lambda(e) = \max\{j \mid e \in \varpi^j \Lambda\}$. Conversely, if $\w$ is an integer valued additive norm, $\Lambda_\w = E_{\w \geq 0} = \{e \in E \mid \w(e) \geq 0\}$ is an $\O$-lattice. One verifies that $\Lambda \mapsto \w_\Lambda$ and $\w \mapsto \Lambda_\w$ give a one-to-one correspondence between the set of $\O$-lattices in $E$ and the set of integer valued additive norms. We think of integer valued additive norms as lattice points in $\wt{\B}_\aff(E)$.

\begin{defn}
Let $B = \{b_1, \ldots, b_r\}$ be a basis for $E$ spanning a frame $L$. One sees that for an $\O$-lattice $\Lambda$, the corresponding additive norm $\w_\Lambda$ is adapted to $L$ if and only if $$\Lambda = \sum_i \varpi^{a_i} b_i,$$ for some $a_i \in \Z$. We then say that $\Lambda$ is \emph{adapted} to the frame $L$ (or the basis $B$). 
\end{defn}

\begin{rem}[Geometric realizations of Tits and Bruhat--Tits buildings]  \label{rem-realization-space-bldg}
One can give natural constructions of the geometric realization of the Tits building and Bruhat--Tits building of $\GL(E)$ from the spaces $\wt{\B}_\sph(E)$ and $\wt{\B}_\aff(E)$ as follows:
\begin{itemize}
    \item[(i)] Let us say that two valuations $\w$ and $\w'$ on $E$ are equivalent, written $\w \sim \w'$, if there exists $m>0$ and $c \in \R$ such that $\w' = m\w + c$. Let $\mathcal{C}$ denote the set of all constant valuations, that is, valuations that have the same value on all nonzero vectors. Then one can show that the quotient space $\B_\sph(E) = (\wt{\B}_\sph(E) \setminus  \mathcal{C}) / \sim$ can be identified with the geometric realization of the Tits building of $\GL(E)$. The apartments in $\B_\sph(E)$ are obtained from the corresponding quotients of extended apartments in $\wt{\B}_\sph(E)$.
    \item[(ii)] Let us say that two additive norms $\w$ and $\w'$ on $E$ are equivalent, written $\w \sim \w'$, if there exists $c \in \R$ such that $\w' = \w + c$. Then one can show that the quotient space $\B_\aff(E) = \wt{\B}_\aff(E) / \sim$ can be identified with the geometric realization of the Bruhat--Tits building of $\GL(E)$. The apartments in $\B_\aff(E)$ are obtained from the quotients of extended apartments in $\wt{\B}_\aff(E)$.
\end{itemize}
\end{rem}

\subsection{Link of a vertex}
Let $\Lambda \subset E$ be an $\O$-lattice. We can form the $\k$-vector space $$E_\Lambda = \Lambda \otimes_\O \k \cong \k^r.$$
In this section we see that a small neighborhood of $\Lambda$ in $\wt{\B}_\aff(E)$ can be identified with a small neighborhood of the origin in the $\wt{\B}_\sph(E_\Lambda)$.

First, we note that, for any $m \geq 0$, $\wt{\B}_m(E)$ can be equipped with a natural metric topology. We recall that for any basis $B = \{b_1, \ldots, b_r\}$, an apartment $\tilde{A}_m(B)$ can be identified with $\R^r$ via $\w \mapsto \{\w(b_1), \ldots, \w(b_r)\}$. This identification gives a metric on the apartment (Euclidean metric) which does not change if we rescale the basis elements. In other words, the metric does not depend on the choice of the basis for the apartment. Moreover, the metrics glue together to give a metric on the whole $\wt{\B}_m(E)$. In this metric, an open ball of radius $\epsilon > 0$ centered at $\w$ is the collection of all $\w' \in \wt{\B}_m(E)$ whose distance to $\w$ in some common apartment is less than $\epsilon$, that is, the union of open balls centered at $\w$ of radius $\epsilon$ in all the extended apartments $\wt{A}_m(B)$ containing $\w$.

It is well-known in the building theory that the \emph{link} of a vertex in an affine building (regarded as a simplicial complex) is a spherical building. The following proposition is a manifestation of this fact in the case of the Bruhat--Tits building of $\GL(E)$.
\begin{prop} \label{prop-link-vertex}
\begin{itemize}
    \item[(a)]
A neighborhood $U_\Lambda$ of $\Lambda$ in the extended Bruhat--Tits building $\wt{\B}_\aff(E)$ can naturally be identified with a neighborhood $U_\sph$ of the origin in the extended Tits building $\wt{\B}_\sph(E_\Lambda)$.
\item[(b)]  There is a one-to-one correspondence between the extended apartments in $\tilde{\B}_\aff(E)$ that contain $\Lambda$ and the extended apartments in $\tilde{\B}_\sph(E_\Lambda)$, and under the identification of $U_\Lambda$ and $U_\sph$, apartments containing $\Lambda$ go to apartments. 
\end{itemize}
\end{prop}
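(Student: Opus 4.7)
The plan is to construct an explicit identification $\Psi \colon U_\Lambda \to U_\sph$ of a small metric ball $U_\Lambda$ around $\w_\Lambda$ in $\wt{\B}_\aff(E)$ with a ball $U_\sph$ around the origin in $\wt{\B}_\sph(E_\Lambda)$, and to deduce the apartment correspondence of (b) from its construction. Fix $\epsilon < 1/2$ and let $U_\Lambda$ be the open ball of radius $\epsilon$ about $\w_\Lambda$ in the building metric. By Remark \ref{rem-ext-bldg-axioms}, for any $\w \in U_\Lambda$ there exists a basis $B = \{b_1, \ldots, b_r\}$ of $E$ to which both $\w$ and $\w_\Lambda$ are adapted. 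After rescaling I may assume $\w_\Lambda(b_i) = 0$ for every $i$, so that $\Lambda = \bigoplus_i \O b_i$ and the classes $\ov{b}_i$ form a basis $\ov{B}$ of $E_\Lambda$. Define $\Psi(\w) = \ov{\w}$ to be the unique valuation on $E_\Lambda$ adapted to $\ov{B}$ with $\ov{\w}(\ov{b}_i) = \w(b_i)$ for all $i$.

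The central step is to check that $\Psi$ is well defined, i.\,e.\ independent of the choice of common apartment. The key estimate is: for any $\w \in U_\Lambda$ and any $0 \neq v \in E$,
\[
|\w(v) - \w_\Lambda(v)| < \epsilon.
\]
This follows by expressing $v = \sum_i \lambda_i b_i$ in the common apartment and comparing $\w(v) = \min_i(\val(\lambda_i) + \w(b_i))$ with $\w_\Lambda(v) = \min_i \val(\lambda_i)$, using $|\w(b_i)| < \epsilon$. Consequently, if $e, e' \in \Lambda$ are two lifts of the same nonzero $\ov{e} \in E_\Lambda$, then $e - e' \in \varpi\Lambda$ gives $\w_\Lambda(e - e') \geq 1$, so $\w(e - e') > 1 - \epsilon > \epsilon > \w(e), \w(e')$, and the non-Archimedean inequality forces $\w(e) = \w(e')$. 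Hence $\ov{\w}$ admits the intrinsic description
\[
\ov{\w}(\ov{e}) = \w(e) \quad \text{for any lift } e \in \Lambda \text{ with } \w_\Lambda(e) = 0,
\]
which makes no reference to any apartment. That $\ov{\w}$ is a level $0$ additive norm (non-Archimedean, invariant under scaling by $\k^\times$, and finite on nonzero vectors) is then routine, using that any $\ov{\lambda} \in \k^\times$ lifts to some $\lambda \in \O^\times$ with $\val(\lambda) = 0$.

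For the inverse, let $U_\sph$ be the ball of radius $\epsilon$ about the origin in $\wt{\B}_\sph(E_\Lambda)$. Given $\ov{\w} \in U_\sph$, pick any basis $\ov{B}$ of $E_\Lambda$ adapted to $\ov{\w}$, lift arbitrarily to $B \subset \Lambda$ with $\Lambda = \bigoplus_i \O b_i$, and let $\w$ be the unique additive norm adapted to $B$ with $\w(b_i) = \ov{\w}(\ov{b}_i)$. A perturbation argument analogous to the one above shows that $\w$ is independent of the chosen lift $B$, and by construction $\Psi(\w) = \ov{\w}$. This establishes part (a). For part (b), an extended apartment $\wt{A}(L)$ containing $\w_\Lambda$ satisfies $\Lambda = \bigoplus_i (\Lambda \cap L_i)$, and reducing modulo $\varpi\Lambda$ produces a frame $\ov{L}$ of $E_\Lambda$. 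Under $\Psi$, the slice $\wt{A}(L) \cap U_\Lambda$ is sent bijectively onto $\wt{A}_\sph(\ov{L}) \cap U_\sph$, since the adapted condition passes through the construction. Different lifts of the same $\ov{L}$ yield apartments whose intersections with $U_\Lambda$ coincide (by the same perturbation argument), giving the desired bijection between apartments containing $\Lambda$, viewed locally near $\Lambda$, and apartments in $\wt{\B}_\sph(E_\Lambda)$.

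The main obstacle is the intrinsic characterization $\ov{\w}(\ov{e}) = \w(e)$ for lifts $e$ with $\w_\Lambda(e) = 0$: while the existence of a common apartment for $\w$ and $\w_\Lambda$ is a standard building axiom, showing that the resulting $\ov{\w}$ is independent of that apartment requires both the quantitative estimate $|\w(v) - \w_\Lambda(v)| < \epsilon$ on all of $E$ and the strict form of the non-Archimedean inequality comparing $\w(e), \w(e')$ with $\w(e-e')$. Once this is in place, the remainder of the argument is essentially bookkeeping between $\O$-module decompositions of $\Lambda$ and $\k$-vector space decompositions of $E_\Lambda$.
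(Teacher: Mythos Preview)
Your argument is correct and takes a different route from the paper's. The paper builds the bijection $U_\Lambda \leftrightarrow U_\sph$ intrinsically via filtrations: it takes $U_\Lambda$ to be the set of additive norms $\w$ with $\varpi\Lambda \subset E_{\w \ge a} \subset \Lambda$ for all $0 \le a < 1$ (shown to be an open neighborhood by a preparatory lemma), and sends such a $\w$ to the valuation on $E_\Lambda$ whose filtration is $E_{\Lambda,a}=E_{\w\ge a}/\varpi\Lambda$; well-definedness is then automatic, with no apartment ever chosen. You instead work apartment-by-apartment, setting $\ov{\w}(\ov{b}_i)=\w(b_i)$ in a common adapted basis and spending the real effort on the estimate $|\w(v)-\w_\Lambda(v)|<\epsilon$ together with the strict ultrametric inequality to extract the basis-free formula $\ov{\w}(\ov{e})=\w(e)$ for any lift with $\w_\Lambda(e)=0$. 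The two constructions yield the same map and both deliver the identity $\w(b_i)=\ov{\w}(\ov{b}_i)$ used downstream in Proposition~\ref{prop-Phi-nu-PL}. Your approach has the advantage that the apartment-to-apartment behaviour in (b) is visible directly from the construction; your hedge ``viewed locally near $\Lambda$'' is in fact the precise formulation, since distinct $\O$-lifts of a fixed frame $\ov{L}$ of $E_\Lambda$ give genuinely different apartments of $\wt{\B}_\aff(E)$ through $\Lambda$ (e.g.\ the frames spanned by $\{(1,0),(0,1)\}$ and $\{(1,\varpi),(0,1)\}$ in $K^2$) whose germs at $\Lambda$ nevertheless coincide under $\Psi$ --- and it is this local correspondence that the later applications actually use.
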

Proposition \ref{prop-link-vertex} is a consequence of the following two lemmas.
\begin{lem}  \label{lem-lattive-vs-subspace}
There is a one-to-one correspondence between the $\O$-lattices $\Lambda'$ such that $\varpi \Lambda \subset \Lambda' \subset \Lambda$ and the $\k$-vector subspaces in $E_\Lambda$. 
\end{lem}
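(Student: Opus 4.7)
The plan is to show that the maps
\[
\Lambda' \longmapsto \Lambda'/\varpi\Lambda \qquad \text{and} \qquad V \longmapsto \pi^{-1}(V),
\]
where $\pi \colon \Lambda \twoheadrightarrow \Lambda/\varpi\Lambda = E_\Lambda$ is the canonical quotient, furnish mutually inverse bijections between the set of $\O$-lattices $\Lambda'$ sandwiched as $\varpi\Lambda \subseteq \Lambda' \subseteq \Lambda$ and the set of $\k$-subspaces of $E_\Lambda$. This is an instance of the standard lattice/submodule correspondence for a quotient, specialized to the DVR setting.

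First I would verify the map in the forward direction. Given an $\O$-submodule $\Lambda'$ with $\varpi\Lambda \subseteq \Lambda' \subseteq \Lambda$, the quotient $\Lambda'/\varpi\Lambda$ is an $\O$-submodule of $E_\Lambda$ annihilated by $\varpi$, hence a module over $\O/\varpi\O = \k$, i.e. a $\k$-vector subspace of $E_\Lambda$. For the reverse direction, given a $\k$-subspace $V \subseteq E_\Lambda$, the preimage $\pi^{-1}(V)$ is an $\O$-submodule of $\Lambda$ containing $\ker(\pi) = \varpi\Lambda$, so it satisfies $\varpi\Lambda \subseteq \pi^{-1}(V) \subseteq \Lambda$.

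Next I would confirm that $\pi^{-1}(V)$ is indeed an $\O$-lattice in $E$, that is, a finitely generated $\O$-submodule of full rank. Full rank is automatic because $\pi^{-1}(V) \supseteq \varpi\Lambda$ and $\varpi\Lambda$ already spans $E$ as a $K$-vector space. Finite generation follows from $\pi^{-1}(V) \subseteq \Lambda$ and the fact that $\O$ is Noetherian (so every submodule of the finitely generated $\O$-module $\Lambda$ is finitely generated). The same reasoning shows that any $\Lambda'$ in the forward direction is automatically a lattice.

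Finally I would check that the two assignments are mutual inverses: for any $V$, we have $\pi^{-1}(V)/\varpi\Lambda = V$ directly from the definition of preimage, and for any $\Lambda'$ as above, $\pi^{-1}(\Lambda'/\varpi\Lambda) = \Lambda'$ since $\varpi\Lambda \subseteq \Lambda'$. There is no real obstacle here; the only minor subtlety is the verification that $\pi^{-1}(V)$ qualifies as an $\O$-lattice (as opposed to merely an $\O$-submodule), which is settled by the sandwich condition together with Noetherianity of $\O$.
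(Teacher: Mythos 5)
Your proposal is correct and follows essentially the same route as the paper: both use the quotient map $\pi\colon \Lambda \to \Lambda/\varpi\Lambda$ and the mutually inverse assignments $\Lambda' \mapsto \pi(\Lambda')$ and $V \mapsto \pi^{-1}(V)$. Your write-up simply makes explicit the routine verifications (full rank, finite generation, mutual inverseness) that the paper leaves implicit.
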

\begin{proof}
Let $\pi: \Lambda \to E_\Lambda = \Lambda / \varpi \Lambda$ be the quotient map. Then $\pi(\Lambda')$ is a $\k$-subspace in $E_\Lambda$. Conversely, for a subspace $W \subset E_\Lambda$, $\pi^{-1}(W)$ is an $\O$-lattice and $\varpi \Lambda \subset \pi^{-1}(W) \subset \Lambda$.  
\end{proof}
\begin{lem}  \label{lem-w-w-Lambda}
Let $\w_\Lambda$ be the additive norm associated to an $\O$-lattice $\Lambda$ and let $\w$ be another additive norm. Suppose both $\w_\Lambda$ and $\w$ are adapted to a basis $B=\{b_1, \ldots, b_r\}$ and $\w_\Lambda(b_i) = 0$, for all $i$, or equivalently, $\Lambda = \sum_i \O b_i$. Then the following are equivalent: 
\begin{itemize}
\item[(i)] For all $0 \leq a < 1$, $\varpi \Lambda \subset E_{\w \geq a} \subset \Lambda$.
\item[(ii)] For all $i$, $0 \leq \w(b_i) < 1$.
\end{itemize}
\end{lem}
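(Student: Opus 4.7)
The plan is to first eliminate the quantifier over $a$ in condition (i), and then verify each direction of the equivalence using that $\w$ is adapted to $B$. Since $E_{\w \geq a}$ is decreasing in $a$, the inclusion $E_{\w \geq a} \subset \Lambda$ for all $a \in [0,1)$ is equivalent to the strongest case $E_{\w \geq 0} \subset \Lambda$. Dually, $\varpi \Lambda \subset E_{\w \geq a}$ for all $a \in [0,1)$ is equivalent to $\varpi \Lambda \subset \bigcap_{a < 1} E_{\w \geq a} = E_{\w \geq 1}$, i.e.\ $\w(e) \geq 1$ for every $e \in \varpi \Lambda$. Condition (i) therefore reduces to the conjunction of these two statements.

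For the implication (ii) $\Rightarrow$ (i), I will use the explicit formula $\w(\sum_i \lambda_i b_i) = \min_i(\val(\lambda_i) + \w(b_i))$ provided by adaptedness. Since $\Lambda = \sum_i \O b_i$, elements of $\varpi \Lambda$ are exactly those with all $\val(\lambda_i) \geq 1$; combined with $\w(b_i) \geq 0$ from (ii), this gives $\w(e) \geq 1$. Conversely, if $\w(e) \geq 0$, then each term satisfies $\val(\lambda_i) + \w(b_i) \geq 0$; since $\w(b_i) < 1$ and $\val(\lambda_i) \in \Z \cup \{\infty\}$, this forces $\val(\lambda_i) \geq 0$, so $e \in \Lambda$.

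For (i) $\Rightarrow$ (ii), I will test the two inclusions of (i) on the specific vectors $\varpi b_i$ and $\varpi^{-1} b_i$. Applying $\varpi \Lambda \subset E_{\w \geq 1}$ to $\varpi b_i$ gives $1 + \w(b_i) \geq 1$, hence $\w(b_i) \geq 0$. If instead $\w(b_i) \geq 1$ for some $i$, then $\w(\varpi^{-1} b_i) = -1 + \w(b_i) \geq 0$ would place $\varpi^{-1} b_i$ in $E_{\w \geq 0} \subset \Lambda$; but the $b_i$-coordinate of $\varpi^{-1} b_i$ has valuation $-1$, so $\varpi^{-1} b_i \notin \Lambda$, a contradiction. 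Hence $0 \leq \w(b_i) < 1$.

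The argument is essentially bookkeeping once the quantifier over $a$ is handled, so I do not anticipate a serious obstacle. The single delicate point is the asymmetry between $\w(b_i) \geq 0$ and the strict $\w(b_i) < 1$ in (ii), which matches the half-open interval $[0,1)$ in (i); crucially, the implication from $\val(\lambda_i) > -1$ to $\val(\lambda_i) \geq 0$ uses that $\val$ is integer-valued on $K^\times$.
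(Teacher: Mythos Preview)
Your proof is correct and follows essentially the same approach as the paper's: both use the adapted formula $\w(\sum_i \lambda_i b_i)=\min_i(\val(\lambda_i)+\w(b_i))$ and test the inclusions on basis vectors (or their $\varpi^{\pm 1}$-multiples). Your preliminary step of collapsing the quantifier over $a$ to the two conditions $E_{\w\geq 0}\subset\Lambda$ and $\varpi\Lambda\subset E_{\w\geq 1}$ makes the argument cleaner, and you spell out the direction (ii) $\Rightarrow$ (i), which the paper leaves to the reader.
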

\begin{proof}
Suppose (i) holds. Thus $\w(\sum_i \lambda_i b_i) \geq a$ implies $\w_\Lambda(\sum_i \lambda_i b_i) \geq 0$. This means that if, for all $i$, $\val(\lambda_i)+\w(b_i) \geq a$ then, for all $i$, $\val(\lambda_i) \geq 0$. From this we conclude that, for all $i$, $\w(b_i) < 1$. Similarly, we know that $\w_\Lambda(\sum_i \lambda_i b_i) \geq 1$ implies $\w(\sum_i \lambda_i b_i) \geq a$, for all $0 \leq a < 1$. This then shows that $\w(b_i) \geq 0$, for all $i$. Thus (ii) holds. The converse can be verified in a similar fashion.   \end{proof}

\begin{proof}[Proof of Proposition \ref{prop-link-vertex}]

(a) We recall that an additive norm $\w: E \to \overline{\R}$ is determined by the decreasing $\R$-filtration of $E$ by $\O$-lattices $$E_{\w \geq a} = \{ e \in E \mid \w(e) \geq a\}.$$
Moreover, for any $a \in \R$ we have $E_{\w \geq a+1} = \varpi E_{\w \geq a}$. 
Now consider the set of all additive norms $\w$ such that for any $0 \leq a < 1$ we have $\varpi \Lambda \subset E_{\w \geq a} \subset \Lambda$. Lemma \ref{lem-w-w-Lambda} shows that this is an open neighborhood $U_\Lambda$ of $\w_\Lambda$ in the extended building $\wt{\B}_\aff(E)$. Now given $\w \in U_\Lambda$, define a decreasing $\R$-filtration $(E_{\Lambda, a})_{a \in \R}$ of subspaces in the $\k$-vector space $E_\Lambda$ by: 
$$
E_{\Lambda, a} = 
\begin{cases}
E_\Lambda = \Lambda / \varpi \Lambda & \text{ if } a < 0 \\ 
E_{\w \geq a} / \varpi \Lambda & \text{ if } 0 \leq a < 1 \\
\{0\} & \text{ if } 1 \geq a. \\
\end{cases} 
$$
This decreasing $\R$-filtration determines a valuation $\overline{\w}$ on $E_\Lambda$ given by: 
\[
\overline{\w}(\overline{e}) = \max\left\{a \in \R \; | \; \overline{e} \in E_{\Lambda,a}\right\}.
\]
Conversely, let $U_\sph$ denote the collection of all valuations on $E_\Lambda$ whose non-infinity values lie in $[0, 1)$. Let $\overline{\w}: E_\Lambda \to \overline{\R}$ be a valuation in $U_\sph$. Then $\overline{\w}$ gives rise to a decreasing filtration of $E_\Lambda$ by subspaces $E_{\Lambda, a} := E_{\Lambda, \overline{\w} \geq a}$, for $0 \leq a <1$. This in turn gives a decreasing filtration of $E$ by $\O$-lattices $E_a := \pi^{-1}(E_{\Lambda, a})$, where $\pi: \Lambda \to E_\Lambda$ is the quotient map. We can define $E_{a}$ for all $a \in \R$ by letting $E_{\Lambda, a} = \varpi^k E_{\Lambda, a'}$ where $a = k + a'$ with $k \in \Z$ and $0 \leq a' < 1$. Now let $\w$ be the additive norm on $E$ defined by the decreasing $\R$-filtration by $\O$-lattices $E_{a}$.
One verifies that $\w \mapsto \overline{\w}$ and $\overline{\w} \mapsto \w$ give a one-to-one correspondence between the additive norms in $U_\Lambda$ and the valuations in $U_\sph$.\\
(b) First we note that the apartments in $\tilde{\B}_\aff(E)$ that contain $\Lambda$ are in one-to-one correspondence with the $\O$-bases for $\Lambda$ up to multiplication by units in $\O$.
Let $B = \{b_1, \ldots, b_r\}$ be an $\O$-basis for $\Lambda$. Then $\overline{B}$, the image of $B$ in $E_\Lambda$, is a $\k$-basis for $E_\Lambda$. One then sees that $B \mapsto \overline{B}$ gives a one-to-one correspondence between the $\O$-bases for $\Lambda$, up to multiplication by units in $\O$, and the $\k$-bases for $E_\Lambda$, up to multiplication by nonzero scalars in $\k$. In other words, there is a one-to-one correspondence between the apartments in $\tilde{\B}_\aff(E)$ that contain $\Lambda$ and the apartments in $\tilde{\B}_\sph(E_\Lambda)$. Now, from the constructions of $\w$ and $\overline{\w}$, it follows that $\w \in U_\Lambda$ is adapted to an $\O$-basis $B = \{b_1, \ldots, b_r\}$ for $\Lambda$ if and only if $\overline{\w}$ is adapted to $\overline{B}$. Moreover, we have:
\begin{equation}  \label{equ-w-vs-bar-w}
\w(b_i) = \overline{\w}(\overline{b}_i),~i=1, \ldots, r.    
\end{equation}
This finishes the proof.
\end{proof}

\subsection{Piecewise linear and affine maps} \label{subsec-PL-PA}
In this section, we review some of the main definitions needed for stating the classification result of toric vector bundles over a DVR from \cite{KMT}. 

We start with the case that $\Sigma_0$ is a fan in $N_\R = N_\R \times \{0\}$.
\begin{defn}[Piecewise linear map to the extended Tits building]
We say that a map $\Phi_0: |\Sigma_0| \to \wt{\B}_\sph(E)$ is a \emph{piecewise linear map} if the following hold:
\begin{itemize}
\item[(a)] For each cone $\sigma \in \Sigma_0$, there is a basis $B_\sigma \subset E$ (not necessarily unique) such that $\Phi_0(\sigma)$ lies in the extended apartment $\wt{A}_\sph(B_\sigma)$.
\item[(b)] ${\Phi_0}_{|\sigma}$ is the restriction of a linear map from the linear span of $\sigma$ to $\wt{A}_\sph(B_\sigma)$ \end{itemize}
We say that $\Phi_0$ is an \emph{integral} piecewise linear map if for each $\sigma \in \Sigma$, ${\Phi_0}_{|\sigma}$ is the restriction of an integral linear map from $N_\R$ to $\wt{A}_\sph(\sigma)$. Here integral means it send lattice points to lattice points.  
\end{defn}

\begin{defn}[Morphism of piecewise linear maps]  \label{def-morphism-PL}
Let $E$ and $E'$ be finite dimensional vector spaces. Let $\Phi_0: \Sigma_0 \to \wt{\B}_\sph(E)$, $\Phi'_0: \Sigma_0 \to \wt{\B}_\sph(E')$ be piecewise linear maps. A \emph{morphism} from $\Phi_0$ to $\Phi'_0$ is a linear map $F: E \to E'$ such that for all $x \in \Sigma_0$ and $e \in E$, we have $\Phi_0(x)(e) \leq \Phi'_0(x)(F(e))$.
\end{defn}

Next, let $\Sigma$ be a fan in $N_\R \times \R_{\geq 0}$. Recall that $\Sigma_1$ denotes the intersection of $\Sigma$ with $N_\R \times \{1\}$. It is a polyhedral complex in $\N_\R \times \{1\}$.

\begin{defn}[Graded piecewise linear map to the total extended building]
We say that a map $\Phi: |\Sigma| \to \wt{\B}(E)$ is a \emph{graded piecewise linear map} if the following hold:
\begin{itemize}
\item[(a)] For any $m \geq 0$ and $(x, m) \in |\Sigma| \subset N_\R \times \R$ we have $\Phi(x, m) \subset \wt{\B}_m(E)$. That is, $\Phi$ sends level $m$ points to level $m$ points.
\item[(b)] For each cone $\sigma \in \Sigma$, there is a basis $B_\sigma \subset E$ (not necessarily unique) such that $\Phi(\sigma)$ lies in the extended apartment $\wt{A}(B_\sigma)$.
\item[(c)] $\Phi_{|\sigma}$ is the restriction of a linear map from the linear span of $\sigma$ to $\wt{A}(B_\sigma)$  
\end{itemize}
We say that $\Phi$ is an \emph{integral} graded piecewise linear map if for each $\sigma \in \Sigma$, $\Phi_{|\sigma}$ is the restriction of an integral linear map from $N_\R \times \R_{\geq 0}$ to $\wt{A}(\sigma)$. 
\end{defn}

The notion of morphism of graded linear maps is defined in the same fashion as in Definition \ref{def-morphism-PL}.


Let $\Phi: |\Sigma| \to \wt{\B}(E)$ be a piecewise linear map. Then 
the restriction $\Phi_1: |\Sigma_1| \to \wt{\B}_\aff(E)$ of $\Phi$ to $|\Sigma_1| = |\Sigma| \cap (N_\R \times \{1\})$ is a piecewise affine map in the following sense:
\begin{defn}[Piecewise affine map to $\wt{\B}_\aff(E)$]
Let $\Sigma_1$ be a polyhedral complex in the affine space $N_\R \times \{1\}$.
A map $\Phi_1: |\Sigma_1| \to \wt{\B}_\aff(E)$ is a \emph{piecewise affine} map if the following holds:
\begin{itemize}
\item[(a)] For any polyhedron $\Delta \in \Sigma_1$, there is an extended apartment $\wt{A}_\aff(\Delta)$ (not necessarily unique) such that the restriction ${\Phi_1}_{|\Delta}$ maps $\Delta$ into $\wt{A}_\aff(\Delta)$.
\item[(b)] For any $\Delta \in \Sigma_1$, ${\Phi_1}_{|\Delta}$ is the restriction of an affine map from $N_\R \times \{1\}$ to the affine space $\wt{A}_\aff(\Delta)$.
\end{itemize}
We say that $\Phi$ is an \emph{integral} piecewise affine map if
for each $\Delta \in \Sigma_1$, ${\Phi_1}_{|\Delta}$ is the restriction of an integral affine map from $N_\R \times \{1\}$ to $\wt{A}_\aff(\Delta)$.
\end{defn}

Let $\Phi_1: |\Sigma_1| \to \wt{\B}(E)$ be an integral piecewise affine map. For any polyhedron $\Delta \in \Sigma_1$, the requirement that $\Phi_{1, \Delta} := {\Phi_1}_{|\Delta}: \Delta \to \wt{A}_\aff(\Delta)$ is an integral affine map means that there exists a $K$-basis $B_\Delta = \{b_{\Delta, 1}, \ldots, b_{\Delta, r}\}$ for $E$ and $u(\Delta) = \{u_{\Delta, 1}, \ldots, u_{\Delta, r}\} \subset M$ such that for any $x \in \sigma$, $\Phi_{1, \Delta}(x)$, as an additive norm on $E$, is given by:
\begin{equation}   \label{equ-Phi-sigma} 
\Phi_{1, \Delta}(x)\left(\sum_i \lambda_i b_{\Delta, i}\right) = \min\{ \val(\lambda_i) + \langle u_{\Delta, i}, x \rangle \mid i=1, \ldots, r\}.
\end{equation}

\section{Classification of toric vector bundles over a field and over a DVR}\label{sec:class}
In this section, we give the statement of classification of toric vector bundles over a DVR in terms of graded piecewise linear maps to the total building $\wt{\B}(E)$. 

First we recall the case of toric vector bundles over a field and how to recover their equivariant Chern classes. The following is a reformulation of Klyachko's classification of toric vector bundles over a field (see \cite[Section 2]{KM-valuation}).
\begin{thm}[Classification of toric vector bundles over a field]
To each toric vector bundle $\E$ over a toric variety $X_\Omega$ over a field, corresponding to a fan $\Omega$, there corresponds a piecewise linear map $\Phi_\E: |\Omega| \to \wt{\B}_\sph(E)$. Moreover, this gives an equivalence of categories between toric vector bundles and piecewise linear maps.
\end{thm}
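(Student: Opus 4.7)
The plan is to reduce the statement to Klyachko's classical classification of toric vector bundles, which identifies $\E$ with the data of its fiber $E = \E_{x_0}$ together with a decreasing $\Z$-filtration $(E^\rho(i))_{i \in \Z}$ of $E$ for each ray $\rho \in \Omega(1)$, subject to the compatibility condition that for every cone $\sigma \in \Omega$ there exists a decomposition $E = \bigoplus_{\chi \in M} E_\chi^\sigma$ such that $E^\rho(i) = \bigoplus_{\langle \chi, v_\rho\rangle \geq i} E_\chi^\sigma$ for every ray $\rho \prec \sigma$. Morphisms $\E \to \E'$ correspond to $K$-linear maps $F \colon E \to E'$ with $F(E^\rho(i)) \subset E'^\rho(i)$ for all $\rho$, $i$.

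From Klyachko data I would build $\Phi_\E$ cone by cone. Fix a maximal cone $\sigma$, choose a basis $B_\sigma = \{b_1, \ldots, b_r\}$ of $E$ adapted to a Klyachko decomposition $E = \bigoplus_\chi E_\chi^\sigma$ with $b_j \in E_{\chi_j}^\sigma$, and define, for $x \in \sigma$,
\[
\Phi_\E(x)\Bigl(\sum_j \lambda_j b_j\Bigr) \;=\; \min_{j \,:\, \lambda_j \neq 0}\,\langle \chi_j, x\rangle.
\]
This is automatically a valuation adapted to $B_\sigma$, depends linearly on $x$, lands in $\wt{A}_\sph(B_\sigma)$, and is integral. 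Independence of the chosen basis within a Klyachko decomposition follows because the values depend only on the weights $\chi_j$ of the isotypic components, and rescaling basis vectors does not affect an adapted valuation. For a face $\tau \prec \sigma$, grouping characters that agree on $\tau$ yields a coarser decomposition that is itself a Klyachko decomposition for $\tau$; a direct check shows the two formulas agree on $\tau$, so the cone-local maps glue into a well-defined integral piecewise linear map $\Phi_\E \colon |\Omega| \to \wt{\B}_\sph(E)$.

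Conversely, given a piecewise linear map $\Phi$, I would recover the Klyachko filtrations by setting $E^\rho(i) = \{e \in E : \Phi(v_\rho)(e) \geq i\}$; integrality of $\Phi$ makes this a $\Z$-filtration. For each cone $\sigma$, the hypothesis that $\Phi|_\sigma$ factors through a single extended apartment $\wt{A}_\sph(B_\sigma)$ identifies the coordinate functions $x \mapsto \Phi(x)(b_j)$ with characters $\chi_j \in M$, and the decomposition $E_\chi^\sigma = \bigoplus_{\chi_j = \chi} K b_j$ is a Klyachko decomposition compatible with every ray of $\sigma$. The two constructions are mutually inverse by inspection. Finally, a morphism of Klyachko data $F \colon E \to E'$ satisfies $\Phi_\E(v_\rho)(e) \leq \Phi_{\E'}(v_\rho)(F(e))$ for each ray $\rho$, and extending linearly along each cone this becomes $\Phi_\E(x)(e) \leq \Phi_{\E'}(x)(F(e))$ for all $x \in |\Omega|$ and $e \in E$, which is precisely the morphism condition of Definition \ref{def-morphism-PL}. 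The main obstacle is bookkeeping: checking that the construction descends from bases to frames and that the cone-by-cone formulas patch on common faces; both reduce to the Klyachko compatibility axiom, but must be verified carefully to ensure $\Phi_\E$ is truly piecewise linear in the sense required.
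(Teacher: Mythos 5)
Your overall route---reducing the statement to Klyachko's classification and translating the filtration data into cone-by-cone adapted valuations---is essentially the paper's own treatment: the paper gives no independent proof but presents the theorem as a reformulation of Klyachko's classification, citing \cite[Section 2]{KM-valuation}, and your cone-local formula $\Phi_\E(x)\bigl(\sum_j \lambda_j b_j\bigr)=\min_{j:\lambda_j\neq 0}\langle \chi_j,x\rangle$ is exactly the level-$0$ specialization of the paper's convention \eqref{equ-Phi-sigma}. The object-level dictionary, the gluing along faces, and your (correct) implicit use of integrality of the piecewise linear maps are all in line with the intended argument.

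One step, however, is justified incorrectly: the claim that the morphism inequality at the rays, $\Phi_\E(v_\rho)(e)\leq \Phi_{\E'}(v_\rho)(F(e))$, yields the inequality at every $x$ in a cone ``by extending linearly.'' For fixed $e$ the functions $x\mapsto \Phi_\E(x)(e)$ and $x\mapsto \Phi_{\E'}(x)(F(e))$ are not linear on $\sigma$ but minima of linear functions, and the naive interpolation goes the wrong way: writing $x=\sum_l t_l v_l$ with $t_l\geq 0$ one only gets $\Phi_\E(x)(e)\geq \sum_l t_l\,\Phi_\E(v_l)(e)$, i.e.\ a lower bound on the left-hand side, so the ray inequalities do not transfer by linearity. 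The implication is nevertheless true, but the correct argument uses the Klyachko decompositions of \emph{both} bundles on $\sigma$: write $E=\bigoplus_u E^\sigma_u$, $E'=\bigoplus_{u'} E'^{\sigma}_{u'}$ and decompose $F$ into components $F_{u',u}\colon E^\sigma_u\to E'^{\sigma}_{u'}$. Preservation of the filtration at a ray $\rho$ of $\sigma$ is equivalent to $\langle u'-u, v_\rho\rangle\geq 0$ for every pair $(u,u')$ with $F_{u',u}\neq 0$; imposing this at all rays forces $u'-u\in\sigma^\vee$ for every such pair, whence $\langle u',x\rangle\geq\langle u,x\rangle$ for all $x\in\sigma$ and therefore $\Phi_\E(x)(e)\leq \Phi_{\E'}(x)(F(e))$ on all of $\sigma$. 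With this replacement (and a word on why $\Phi_\E(x)$ is independent of the chosen compatible decomposition, which you flagged among the bookkeeping points), your sketch is a faithful expansion of the proof the paper delegates to Klyachko and to \cite{KM-valuation}.
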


The equivariant Chern classes of a toric vector bundle can be immediately read off from its corresponding piecewise linear map (\cite[Prop. 3.1]{Payne-chern} or \cite[Corollary 3.5]{KM}). Recall that the $i$-th elementary symmetric function $\epsilon_i$ on $\R^r$ naturally induces a function $\epsilon_i: \wt{\B}_\sph(E) \to \R$ as follows: let $  \w: E \to \overline{\R}$ be a valuation with value set $\w(E) = \{a_1, \ldots, a_k, \infty\}$. We define $\epsilon_i(\w)$ to be the value of the $i$-th elementary symmetric function on $a_1, \ldots, a_k$ where each $a_j$ is 
repeated $\dim(E_{\w \geq a_j} / E_{\w > a_j})$ times. 

Consider the equivariant Chern class $c_i^T(\E) \in \CH^i_T(X_{\Omega})$ of a  toric vector bundle $\E$ over $X_{\Omega}$. We have already seen in Theorem~\ref{th:payne-chow} that it can be represented by a piecewise polynomial function of degree $i$ with respect to the fan $\Omega$.

\begin{thm}[Equivariant Chern classes]   \label{th-equiv-Chern-tvb-field}
The $i$-th equivariant Chern class of $\E$ is represented by the piecewise polynomial function $\epsilon_i \circ \Phi_\E: |\Omega| \to \R$.    
\end{thm}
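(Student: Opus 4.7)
The plan is to reduce the statement to a local computation on each cone of $\Omega$, where the toric vector bundle splits as a direct sum of toric line bundles, and then to apply the multiplicativity of the total equivariant Chern class. For a fixed cone $\sigma \in \Omega$, the defining condition $\Phi_\E(\sigma) \subset \wt{A}_\sph(B_\sigma)$ together with the Klyachko correspondence (in the form phrased by piecewise linear maps) gives a decomposition $\E|_{U_\sigma} \cong \bigoplus_{i=1}^{r} \L_{\sigma,i}$ as a sum of toric line bundles, where each $\L_{\sigma,i}$ carries a character $u_{\sigma,i} \in M_\R$ determined by the identity $\Phi_\E(x)(b_{\sigma,i}) = \langle u_{\sigma,i}, x \rangle$ for $x \in \sigma$. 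This is the one genuinely nontrivial input.

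Given the local splitting, the total equivariant Chern class factors as $c_T(\E|_{U_\sigma}) = \prod_{i=1}^r \bigl(1 + c_T^1(\L_{\sigma,i})\bigr)$, so that $c_T^i(\E|_{U_\sigma}) = \epsilon_i\bigl(c_T^1(\L_{\sigma,1}), \ldots, c_T^1(\L_{\sigma,r})\bigr)$. Under the Payne isomorphism $\CH^*_T(X_\Omega) \cong \PP^*(\Omega)$ of Theorem \ref{th:payne-chow}, the first equivariant Chern class of the toric line bundle $\L_{\sigma,j}$ is represented on $\sigma$ by the linear function $x \mapsto \langle u_{\sigma,j}, x \rangle = \Phi_\E(x)(b_{\sigma,j})$. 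Hence the restriction of $c_T^i(\E)$ to $\sigma$ is represented by the polynomial $\epsilon_i\bigl(\Phi_\E(x)(b_{\sigma,1}), \ldots, \Phi_\E(x)(b_{\sigma,r})\bigr)$. Because $\Phi_\E(x)$ is adapted to the basis $B_\sigma$ for every $x \in \sigma$, the very definition of the elementary symmetric function $\epsilon_i: \wt{\B}_\sph(E) \to \R$ identifies this polynomial with $\epsilon_i(\Phi_\E(x))$.

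It remains to observe that the local descriptions glue into a global piecewise polynomial function. This is automatic: $\epsilon_i$ is a well-defined function on $\wt{\B}_\sph(E)$, depending only on the multiset of jumps of the associated $\R$-filtration of $E$ and not on any particular adapted apartment used to compute it, so the polynomials obtained on adjacent cones agree on their common face. The main obstacle is the local splitting invoked at the start, which rests on the equivalence of categories between toric vector bundles and piecewise linear maps to $\wt{\B}_\sph(E)$; the subsequent Chern class calculation is a formal consequence of multiplicativity and of the definition of $\epsilon_i$. Some care is needed in bookkeeping the sign conventions relating the Klyachko characters $u_{\sigma,i}$ to the coordinates of $\Phi_\E$ in the apartment $\wt{A}_\sph(B_\sigma)$, since different sources use filtrations indexed by $u$ or by $-u$, but this does not affect the final description.
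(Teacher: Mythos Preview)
The paper does not prove this theorem; it merely recalls it as a known result, citing \cite[Prop.~3.1]{Payne-chern} and \cite[Corollary~3.5]{KM}. Your argument is the standard one underlying those references: local splitting over each affine chart $U_\sigma$ into toric line bundles, multiplicativity of the total equivariant Chern class, identification of $c_T^1$ of a toric line bundle with its character under Payne's isomorphism, and the observation that $\epsilon_i$ is well-defined on $\wt{\B}_\sph(E)$ independently of the chosen adapted basis. This is correct and matches the cited sources, so there is nothing to compare against a proof in the present paper.
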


Finally, we recall the classification of toric vector bundles over a DVR from \cite{KMT}. 
\begin{thm}[Classification of toric vector bundles over a DVR]  \label{th:vb-dvr}
To each toric vector bundle $\E$ over a toric scheme $\X_\Sigma$ there corresponds a graded piecewise linear map $\Phi_\E$ from $|\Sigma|$ to the total extended building $\wt{\B}(E)$. If $\rec(\Sigma_1) = \Sigma_0$ is a fan, then equivalently, to $\E$ we can associate the piecewise affine map $\Phi_{\E, 1} = {\Phi_{\E|}}_{|\Sigma_1|}: |\Sigma_1| \to \wt{\B}_1(E) = \wt{\B}_\aff(E)$. Moreover, this gives rise to an equivalence of categories. Furthermore, the restriction $\Phi_{\E, 0} = {\Phi_{\E|}}_{|\Sigma_0|} \colon |\Sigma_0| \to \wt{\B}_0(E) = \wt{\B}_{\sph}(E)$ is the piecewise linear map corresponding to the restriction of $\E$ to the generic fiber. 
\end{thm}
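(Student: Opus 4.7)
The plan is to reduce to the affine case and then glue. Let $\sigma \in \Sigma$ be a cone with corresponding affine toric scheme $\U_\sigma = \Spec(R_\sigma)$, where $R_\sigma = \O[\chi^u \varpi^k \mid (u,k) \in \sigma^\vee \cap \wt{M}]$ carries a natural $\sigma^\vee \cap \wt{M}$-grading. A $T$-equivariant vector bundle on $\U_\sigma$ is equivalent to a finitely generated projective $R_\sigma$-module $M_\sigma$ with a $\wt{M}$-grading compatible with this grading. Since $R_\sigma$ is Noetherian and equivariantly local enough (its distinguished orbit $O_\sigma$ lies in the closure of every orbit in $\U_\sigma$), one shows by an equivariant Nakayama-style argument that $M_\sigma$ decomposes $T$-equivariantly as a direct sum of rank-one pieces $M_\sigma = \bigoplus_{i=1}^r R_\sigma \cdot b_{\sigma,i}$, where each $b_{\sigma,i}$ is an eigenvector of weight $(u_{\sigma,i}, k_{\sigma,i}) \in \wt{M}$. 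This is the DVR analogue of Klyachko's observation that equivariant vector bundles on affine toric varieties split.

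Next I would produce the piecewise linear map. Trivializing $\E$ on the open orbit identifies the fiber at the distinguished $K$-point $x_0$ with a fixed $E \cong K^r$, and the weight vectors $b_{\sigma,i}$ provide, after this identification, a basis $B_\sigma = \{b_{\sigma,1}, \ldots, b_{\sigma,r}\}$ of $E$. For any $x = (y, m) \in \sigma \subset N_\R \times \R_{\geq 0}$, define $\Phi_\E(x) \colon E \to \overline{\R}$ on a generic element $e = \sum_i \lambda_i b_{\sigma,i}$ by
\[
\Phi_\E(x)(e) = \min_i \bigl( m\,\val(\lambda_i) + \langle u_{\sigma,i}, y\rangle + k_{\sigma,i}\, m \bigr).
\]
By construction $\Phi_\E(x)$ is a level-$m$ additive norm adapted to $B_\sigma$, it is linear in $x$ on $\sigma$, and it is integral and graded in the sense required. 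The restriction to $\sigma \cap (N_\R \times \{0\})$ uses only the $u_{\sigma,i}$-part, recovering Klyachko's piecewise linear map to $\wt{\B}_\sph(E)$; evaluated at level $1$ it lands in $\wt{\B}_\aff(E)$ and matches the formula \eqref{equ-Phi-sigma}. The piecewise linear nature globally follows from compatibility of the local splittings on overlaps $\U_\sigma \cap \U_\tau = \U_{\sigma \cap \tau}$: restriction to a common face $\sigma \cap \tau$ gives the same filtration data, so the maps agree on $|\sigma \cap \tau|$.

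For the inverse functor, given a graded piecewise linear map $\Phi \colon |\Sigma| \to \wt{\B}(E)$, for each cone $\sigma$ pick a basis $B_\sigma$ such that $\Phi(\sigma) \subset \wt{A}(B_\sigma)$; the affine-linear data of $\Phi|_\sigma$ in coordinates of $B_\sigma$ produces weights $(u_{\sigma,i}, k_{\sigma,i}) \in \wt{M}$ (integrality ensures these lie in $\wt{M}$ and not just $\wt{M}_\R$), hence a graded free $R_\sigma$-module $M_\sigma$. Independence of the choice of $B_\sigma$ up to isomorphism, and agreement on overlaps, is checked using the apartment axioms together with the fact that $\Phi$ sends faces to common sub-apartments. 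Gluing the $M_\sigma$ produces $\E$ globally, and functoriality in both directions follows by tracking what morphisms of piecewise linear maps (Definition \ref{def-morphism-PL}) do to weight decompositions.

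The main obstacle is the equivariant splitting of $M_\sigma$ on an affine toric scheme over $S$, particularly when $\sigma$ is not contained in $N_\R \times \{0\}$: one must show that the DVR direction does not obstruct the decomposition into weight lines. This reduces to showing that for the graded ring $R_\sigma$, every finitely generated projective graded module is graded free, which in turn follows from the fact that $R_\sigma$ is a graded Noetherian domain whose graded maximal ideal is generated by the eigen-sections vanishing on $O_\sigma$, so one can apply a graded Nakayama argument lifting a basis from the fiber at $O_\sigma$. The final assertion about the generic fiber is then immediate: restriction to $\X_{\Sigma,\eta}$ corresponds to restricting $\Phi_\E$ to cones with $m=0$, i.e.\ to $|\Sigma_0|$, yielding the Klyachko map of $\E|_{\X_{\Sigma,\eta}}$.
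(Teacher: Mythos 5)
The paper itself does not prove Theorem \ref{th:vb-dvr}; it is imported verbatim from \cite{KMT}, so there is no internal argument to compare you against line by line. Your overall strategy --- Klyachko-style equivariant splitting over the affine charts $\U_\sigma$, extraction of a basis of $E$ together with characters, gluing, and an inverse construction cone-by-cone from apartments --- is the natural route and is consistent with the local data the paper actually uses later (the descriptions $H^0(\U_\Delta,\E_{|\U_\Delta})=\bigoplus_i R_\Delta b_{\Delta,i}$ and the formula \eqref{equ-Phi-sigma}).

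There is, however, a genuine gap at your foundational step. You assert that a $T$-equivariant bundle on $\U_\sigma$ is a finitely generated projective $R_\sigma$-module with a compatible $\wt{M}$-grading, and that the generators $b_{\sigma,i}$ are ``eigenvectors of weight $(u_{\sigma,i},k_{\sigma,i})\in\wt{M}$''. The torus $T$ has relative dimension $n$ and character lattice $M$, not $\wt{M}$: equivariance gives only an $M$-grading, and $R_\sigma$ admits no $\wt{M}$-grading as an $\O$-algebra (the would-be pieces $\O\,\chi^u\varpi^k$ for varying $k$ are nested rather than in direct sum; there is no extra $\mathbb{G}_m$ acting in the $\varpi$-direction). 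Consequently only the $u_{\sigma,i}$ are torus weights, and the integers $k_{\sigma,i}$ --- exactly the data that upgrades the target from the Tits building to the Bruhat--Tits building --- are not produced by your argument as written. They must be read off from the $\O$-structure, e.g.\ by embedding $H^0(\U_\sigma,\E_{|\U_\sigma})\subset K[M]\otimes_K E$ via the open-orbit trivialization and recording the $\varpi$-adic position of each $M$-homogeneous generator relative to $E$, equivalently via the $\O$-lattice filtrations the sections induce on $E$. This omission propagates: since your $\Phi_\E(x)$ is defined through a chosen splitting, you never show it is independent of that choice, which is precisely what your one-line claims need both for agreement on faces $\sigma\cap\tau$ and for the independence of choices in the inverse construction; the standard fix is to define $\Phi_\E(x)(e)$ intrinsically from the module of sections (as an extremum of $\langle (u,k),x\rangle$ over those $(u,k)\in\wt{M}$ for which $\chi^u\varpi^k e$ is a section) and only then use the splitting to identify its piecewise-linear form. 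Two smaller points: your graded-Nakayama splitting is valid as stated only for $\sigma$ of full dimension (so that $\sigma^\vee$ is pointed, the homogeneous non-units form the unique maximal graded ideal, and the degree-zero part is the local ring $\O$, resp.\ $K$ when $\sigma\subset N_\R\times\{0\}$); lower-dimensional cones require splitting off a torus factor. And for the ``equivalently $\Phi_{\E,1}$'' clause you should justify why the level-one slice determines the whole graded map, using homogeneity on cones over polyhedra of $\Sigma_1$ together with the hypothesis $\rec(\Sigma_1)=\Sigma_0$ for the level-zero cones.
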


 
\section{Toric vector bundles over irreducible components of the special fiber}\label{sec:special-fiber}
Let $\Sigma$ be a fan in $N_{\R} \times \R_{\geq 0}$. Recall that $\Sigma_1$ and $\Sigma_0$ denote the intersections with $N_{\R} \times \{1\}$ and with $N_{\R} \times \{1\}$, respectively. Assume that $\rec(\Sigma_1) = \Sigma_0$ is a fan and that the vertices of all $\Delta \in \Sigma_1$ are in the lattice $N \times \{1\}$.

Let $\E$ be a toric vector bundle over the toric scheme $\X = \X_\Sigma$ with corresponding piecewise affine map $\Phi_1: |\Sigma_1| \to \wt{\B}_\aff(E)$. Let $\nu$ be a vertex of the polyhedral complex $\Sigma_1$ and let $\X_{s,\nu}$ denote the irreducible component of the special fiber $\X_s$ corresponding to $\nu$. This is the same as the vertical cycle $V(\nu)$ from Definition \ref{def:vertical-cycles} and it is itself a toric variety for the action of the residue torus $T_{s}$ corresponding to the star fan $\Sigma_1(\nu)$. Thus, $\E_{|\X_{s,\nu}}$ is a toric vector bundle over the toric variety $\X_{s,\nu}$ and hence corresponds to a piecewise linear map $\Phi_\nu$ from the fan of $\X_{s,\nu}$ to an extended Tits building. In this section we give a natural description of this piecewise linear map as $\Phi_1$ restricted to a small neighborhood of $\nu$ which then lands in a small neighborhood of $\Lambda = \Phi_1(\nu)$. By Proposition \ref{prop-link-vertex}, this small neighborhood can be identified with a small neighborhood of the extended Tits building $\wt{\B}_\sph(E_\Lambda)$, where $E_\Lambda$ denotes the $\k$-vector space $\Lambda / \varpi \Lambda$. Thus, restriction of $\Phi_1$ to a small neighborhood of $\nu$ gives rise to a piecewise linear map $\Phi_\nu$ from the star $\Sigma_1(\nu)$, of the polyhedral complex $\Sigma_1$ at $\nu$, to $\wt{\B}_\sph(E_\Lambda)$.

First we need the following result which shows that $\Phi_\nu: |\Sigma_1(\nu)| \to \tilde{\B}_\sph(E_\Lambda)$ is actually a piecewise linear map and describes its piecewise linear data in terms of piecewise affine data of $\Phi_1$.
\begin{prop}   \label{prop-Phi-nu-PL}
With notation as above, let $\Delta$ be a polyhedron in $\Sigma_1$ with vertex $\nu$. Let $\Delta_\nu$ be the corresponding cone in the star fan $\Sigma_1(\nu)$. 
Let $B_\Delta = \{b_{\Delta,1}, \dotsc, b_{\Delta,r}\} \subset E$ be the $\O$-basis for $\Lambda$ with multiset of characters $u(\Delta) = \{u_{\Delta,1}, \dotsc, u_{\Delta,r}\}$ giving the affine data of ${\Phi}_{1, \Delta}$ (see \eqref{equ-Phi-sigma}). That is, for all $x \in \Delta$, $\Phi_1(x)$ is the additive norm on $E$ adapted to $B_\Delta$ with: $$\Phi_1(x)(b_{\Delta, i}) = \langle u_{\Delta, i}, x \rangle, ~i=1, \ldots, r.$$
Then the piecewise linearity data of $\Phi_\nu$ on the cone $\Delta_\nu$ is given by the basis $\overline{B}_\Delta = \{\overline{b}_{\Delta,1}, \dotsc, \overline{b}_{\Delta,r}\}$, the image of $B_\Delta$ in $E_\Lambda$, and the same multiset of characters $u(\Delta)$. That is, for $x \in \Delta_\nu$, $\Phi_\nu(x)$ is the valuation on $E_\Lambda$ adapted to $\overline{B}_\Delta$ with:
$$\Phi_\nu(x)(\overline{b}_{\Delta, i}) = \langle u_{\Delta, i}, x \rangle, ~i=1, \ldots, r.$$
\end{prop}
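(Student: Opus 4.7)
The strategy is to read off $\Phi_\nu$ by transporting the piecewise affine data of $\Phi_1$ near $\nu$ across the link identification of Proposition \ref{prop-link-vertex}. I would restrict $\Phi_1$ to a small wedge at $\nu$ inside $\Delta$, apply the neighborhood bijection $U_\Lambda \simeq U_\sph$ pointwise, and unwind the definition of the star cone $\Delta_\nu = \R_{\geq 0}(\Delta - \nu)$ to produce a map valued in the extended apartment $\wt{A}_\sph(\overline{B}_\Delta) \subset \wt{\B}_\sph(E_\Lambda)$ whose formula matches the claim.

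The first observation is that the hypothesis that $B_\Delta$ is an $\O$-basis of $\Lambda$ normalizes the affine data so that $\langle u_{\Delta,i}, \nu\rangle = 0$ for every $i$. Indeed, the additive norm $\Phi_1(\nu)$ is adapted to $B_\Delta$ and satisfies $\Phi_1(\nu)(b_{\Delta,i}) = \langle u_{\Delta,i}, \nu\rangle$; the lattice it determines is $\sum_i \varpi^{-\langle u_{\Delta,i},\nu\rangle} \O b_{\Delta,i}$, which agrees with $\Lambda = \sum_i \O b_{\Delta,i}$ precisely when all these pairings vanish. If one starts from an arbitrary adapted basis, this normalization is arranged by rescaling $b_{\Delta,i} \mapsto \varpi^{-\langle u_{\Delta,i},\nu\rangle} b_{\Delta,i}$, a unit ambiguity which does not affect $u(\Delta)$.

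Next, for $x \in \Delta_\nu$ and $\epsilon > 0$ small, the point $\nu + \epsilon x$ lies in $\Delta$, so by \eqref{equ-Phi-sigma} and the normalization,
\[
\Phi_1(\nu + \epsilon x)(b_{\Delta,i}) \;=\; \langle u_{\Delta,i}, \nu + \epsilon x\rangle \;=\; \epsilon \langle u_{\Delta,i}, x\rangle.
\]
For $\epsilon$ small these values are small in absolute value, so $\Phi_1(\nu+\epsilon x)$ lies in the link-neighborhood $U_\Lambda$ of Proposition \ref{prop-link-vertex}. Since $B_\Delta$ is an $\O$-basis of $\Lambda$ to which $\Phi_1(\nu+\epsilon x)$ is adapted, Proposition \ref{prop-link-vertex}(b) and identity \eqref{equ-w-vs-bar-w} give that the image valuation on $E_\Lambda$ is adapted to $\overline{B}_\Delta$ with value $\epsilon \langle u_{\Delta,i}, x\rangle$ on $\overline{b}_{\Delta,i}$. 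Unwinding the definition of $\Phi_\nu$ as the composition of $\Phi_1$ with the link identification, one gets $\Phi_\nu(\epsilon x)(\overline{b}_{\Delta,i}) = \epsilon \langle u_{\Delta,i}, x\rangle$, and extending linearly from a small neighborhood of the origin to the entire cone $\Delta_\nu$, which is allowed by the piecewise linearity of $\Phi_\nu$, yields the asserted formula.

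The only technical subtlety, and hence the main bookkeeping obstacle, is that the neighborhood $U_\Lambda$ in Proposition \ref{prop-link-vertex} is specified by values in $[0,1)$, while the quantities $\epsilon \langle u_{\Delta,i}, x\rangle$ may be of either sign. The remedy is either to widen $U_\Lambda$ to a symmetric neighborhood of $\Lambda$ (the $\R$-filtration-by-lattices recipe of the proof of Proposition \ref{prop-link-vertex} carries over verbatim), or to rescale $B_\Delta$ cone-by-cone so that all pairings $\langle u_{\Delta,i}, \cdot\rangle$ are nonnegative on the given cone of $\Delta_\nu$. With this fix the calculation is valid on each maximal cone of $\Sigma_1(\nu)$, and the apartment correspondence of Proposition \ref{prop-link-vertex}(b) glues the local data into a well-defined piecewise linear map $\Phi_\nu \colon |\Sigma_1(\nu)| \to \wt{\B}_\sph(E_\Lambda)$.
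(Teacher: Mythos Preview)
Your proof is correct and follows exactly the paper's approach: the paper's own proof is the single line ``The claim follows from Proposition \ref{prop-link-vertex}(2) and in particular Equation \eqref{equ-w-vs-bar-w}.'' Your version simply unpacks this, supplying the details the paper leaves implicit---the normalization $\langle u_{\Delta,i},\nu\rangle = 0$ forced by the $\O$-basis hypothesis, the $\epsilon$-rescaling into the link neighborhood, and the sign issue with the one-sided window $[0,1)$---so it is in fact more complete than the original.
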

\begin{proof}
The claim follows from Proposition \ref{prop-link-vertex}(2) and in particular Equation \eqref{equ-w-vs-bar-w}.
\end{proof}

Recall that for each cone $\sigma \in \Sigma$ we have an affine toric scheme $\U_\sigma \subset \X_\Sigma$ defined as the spectrum of the ring:
$$R_\sigma = \O[\chi^u \varpi^k \mid (u, k) \in \sigma^\vee \cap \wt{M}].$$
When $\sigma$ is the cone over a polyhedron $\Delta \in \Sigma_1$, we denote the scheme $\U_\sigma$ and the ring $R_\sigma$ also by $\U_\Delta$ and $R_\Delta$ respectively.
In particular, the open orbit $\U_0 \cong T_{\eta}$ is the spectrum of the Laurent polynomial ring $R_0 = K[\chi^u \mid u \in M]$. The homomorphism that sends every $\chi^u$ to $1$ defines a $K$-point $x_0 \in \U_0$. 
A polyhedron $\Delta \in \Sigma_1$ that has $\nu$ as a vertex determines an affine chart $\overline{\U}_{\nu, \Delta}$ in $\X_{s, \nu}$ defined as the spectrum of the ring:
\begin{equation*}
\overline{R}_{\nu, \Delta} = \k[\chi^u \mid (u, -\langle u, \nu \rangle) \in \sigma^\vee \cap \wt{M} ], \end{equation*}
where $\sigma \in \Sigma$ is the cone over the polyhedron $\Delta$.
When $\Delta = \nu$, the ring $\overline{R}_{\nu, \nu} = \k[M]$ is the coordinate ring of the open $T_s$-orbit $\overline{\U}_{\nu, \nu}$ in $\X_{s, \nu}$. The homomorphism that sends every $\chi^u$ to $1$, defines a $\k$-point $x_\nu \in \overline{\U}_{\nu, \nu} \subset \X_{s, \nu}$. We take $x_{\nu}$ to be the distinguished point (in the open $T_s$-orbit) in $\X_{s,\nu}$.

Consider the affine scheme $\U_\nu$. By definition of the $\O$-lattice $\Lambda = \Phi_1(\nu) \subset E$ (see \cite[Equation (13)]{KMT}), the space of sections $H^0(\U_\nu, \E_{|\U_\nu}) \subset K[M] \otimes_K E$ can be described as:
$$H^0(\U_\nu, \E_{|\U_\nu}) = \sum_{u \in M} \chi^u \varpi^{-\langle u, \nu \rangle} \Lambda.$$
We have a homomorphism $R_\nu \to \O$ given by $\chi^u \varpi^{-\langle u, \nu \rangle} \mapsto 1$, $\forall u \in M$. It corresponds to the morphism $S=\Spec(\O) \to \U_\nu$ which we think of as the infinitesimal curve in $\U_\nu$, corresponding to $\nu$, approaching the point $x_\nu$ in the special fiber.
Then the fiber $\E_{x_\nu}$ is obtained as:
$$\E_{x_\nu} = H^0(\U_\nu, \E_{|\U_\nu}) \otimes_{R_\nu} \k,$$
where we consider $\k$ as an $R_\nu$-module via the homomorphism $R_\nu \to \O \to \k$.  Thus, one sees that $$\E_{x_\nu} = (\sum_{u \in M} \chi^u \varpi^{-\langle u, \nu \rangle} \Lambda) \otimes_{R_\nu} \k = (\sum_{u \in M} \Lambda) \otimes_\O \k = \Lambda \otimes_\O \k.$$ That is, the fiber $\E_{x_\nu}$ can naturally be identified with $E_\Lambda = \Lambda \otimes_\O \k$. 

\begin{thm}[Piecewise linear map corresponding to restriction of $\E$ to $\X_{s,\nu}$]  \label{th:link}
Consider the piecewise linear map $\Phi_\nu: |\Sigma_1(\nu)| \to \wt{\B}_\sph(E_\Lambda)$ obtained by restricting the piecewise affine map $\Phi_1$ to a small neighborhood of $\nu$ in $\Sigma_1(\nu)$ and by looking at a small neighborhood (link) of $\Lambda = \Phi_1(\nu)$ in $\wt{\B}_\aff(E)$ which we can naturally identify with a small neighborhood of the origin in $\wt{\B}_{\sph}(E_\Lambda)$ (Proposition \ref{prop-link-vertex}). Then $\Phi_\nu$ is the piecewise linear map corresponding to the restriction of $\E$ to the $T_{s}$-toric variety $\X_{s,\nu}$.
\end{thm}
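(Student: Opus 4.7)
The plan is to compare the piecewise linear data already computed for $\Phi_\nu$ in Proposition \ref{prop-Phi-nu-PL} with the Klyachko data of the toric vector bundle $\E|_{\X_{s,\nu}}$, and to invoke the uniqueness of the classifying piecewise linear map over the residue field $\k$ to conclude that the two maps agree.

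First I would recall that by Klyachko's classification of toric vector bundles over a field, the restriction $\E|_{\X_{s,\nu}}$ is classified by a unique piecewise linear map $\Psi\colon |\Sigma_1(\nu)| \to \wt{\B}_\sph(\E_{x_\nu})$. Using the identification $\E_{x_\nu} \cong E_\Lambda = \Lambda/\varpi\Lambda$ established just before the statement of the theorem, I regard $\Psi$ as a map into $\wt{\B}_\sph(E_\Lambda)$. The goal becomes to show $\Psi = \Phi_\nu$; since both are piecewise linear, it suffices to fix a top-dimensional polyhedron $\Delta \in \Sigma_1$ containing $\nu$ and verify that $\Psi$ and $\Phi_\nu$ have the same linear data on the cone $\Delta_\nu \in \Sigma_1(\nu)$.

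By Proposition \ref{prop-Phi-nu-PL}, the linear data of $\Phi_\nu$ on $\Delta_\nu$ is the pair $(\overline{B}_\Delta, u(\Delta))$, so I need to verify that this is also the Klyachko data of $\E|_{\X_{s,\nu}}$ on $\Delta_\nu$. To this end, I would exhibit an explicit $T_s$-equivariant trivialization of $\E|_{\overline{\U}_{\nu,\Delta}}$ realizing this data, by lifting to the affine open $\U_\Delta \subset \X_\Sigma$. The piecewise affine datum \eqref{equ-Phi-sigma} on $\Delta$ produces $T$-eigensections $s_1,\dotsc,s_r \in H^0(\U_\Delta, \E|_{\U_\Delta})$ of $T$-weights $u_{\Delta,1},\dotsc,u_{\Delta,r}$ which form an $R_\Delta$-free basis and which, viewed inside the description $H^0(\U_\nu, \E|_{\U_\nu}) = \sum_u \chi^u \varpi^{-\langle u,\nu\rangle}\Lambda$ recalled before the theorem, correspond to the basis elements $b_{\Delta,i}$ in each weight piece. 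Reducing modulo $\varpi$, the chart $\overline{\U}_{\nu,\Delta} = \U_\Delta \cap \X_{s,\nu}$ arises via the homomorphism $\chi^u \varpi^{-\langle u,\nu\rangle}\mapsto 1$ to $\k$, and the images $\overline{s}_1,\dotsc,\overline{s}_r$ in $H^0(\overline{\U}_{\nu,\Delta},\E|_{\overline{\U}_{\nu,\Delta}})$ form a $T_s$-equivariant $\overline{R}_{\nu,\Delta}$-free basis of the same weights. Evaluation at $x_\nu$ sends $s_i$ to $b_{\Delta,i}\otimes 1 \in \Lambda \otimes_\O \k$, which under the identification $\E_{x_\nu} \cong E_\Lambda$ is exactly $\overline{b}_{\Delta,i}$. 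Hence the Klyachko data of $\E|_{\X_{s,\nu}}$ on $\Delta_\nu$ is precisely $(\overline{B}_\Delta, u(\Delta))$, matching that of $\Phi_\nu$; by uniqueness $\Psi = \Phi_\nu$.

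The main technical obstacle will be the careful bookkeeping of $\varpi$-twists when passing between $\U_\Delta$ and its restriction $\overline{\U}_{\nu,\Delta}$: one has to verify that the $T$-eigensections produced by the piecewise affine data are regular on all of $\U_\Delta$ (not merely on $\U_\nu$), that the rescaling by $\varpi^{-\langle u_{\Delta,i},\nu\rangle}$ yields elements that freely generate $\E|_{\U_\Delta}$ over $R_\Delta$, and that their reductions modulo $\varpi$ are compatible with the specific identification $\E_{x_\nu} \cong E_\Lambda$ fixed in the paragraph preceding the theorem. Once this bookkeeping is in place, the rest is a formal application of the uniqueness part of Klyachko's classification on each cone of $\Sigma_1(\nu)$.
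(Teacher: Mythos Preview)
Your proposal is correct and follows essentially the same route as the paper: fix a polyhedron $\Delta$ with vertex $\nu$, compute the local $T_s$-equivariant trivialization of $\E|_{\overline{\U}_{\nu,\Delta}}$ by base-changing the free module $H^0(\U_\Delta,\E|_{\U_\Delta}) = \bigoplus_i R_\Delta\, b_{\Delta,i}$ along the ring map $R_\Delta \to \overline{R}_{\nu,\Delta}$, identify the resulting basis $\overline{B}_\Delta$ and weights $u(\Delta)$, and match this against the piecewise linear data of $\Phi_\nu$ supplied by Proposition~\ref{prop-Phi-nu-PL}. The only cosmetic difference is that the paper writes the base change $H^0(\U_\Delta,\E|_{\U_\Delta})\otimes_{R_\Delta}\overline{R}_{\nu,\Delta}$ directly rather than phrasing it in terms of reducing $T$-eigensections, and it makes the homomorphism $\iota^*\colon R_\Delta \to \overline{R}_{\nu,\Delta}$ (sending $\chi^u\varpi^k$ to $\chi^u$ when $k=-\langle u,\nu\rangle$ and to $0$ otherwise) explicit, which cleanly handles the $\varpi$-twist bookkeeping you flagged as the main technical point.
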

\begin{proof}
We would like to show that $\Phi_\nu$ is the piecewise linear map associated to the $T_{s}$-toric vector bundle $\E_{|\X_{s,\nu}}$. To show this, we verify that the local equivariant triviality data of $\E_{|\X_{s,\nu}}$ agrees with the piecewise linearity data of $\Phi_\nu$. As above, let $\Delta$ be a polyhedron in $\Sigma_1$ with vertex $\nu$, 
and $\U_\Delta \subset \X_\Sigma$ the affine toric scheme corresponding to $\Delta$ and $\overline{\U}_{\nu, \Delta}$ the affine toric chart corresponding to $(\nu, \Delta)$ in the $T_{s}$-toric variety $\X_{s, \nu}$. {We recall that $\overline{\U}_{\nu, \Delta}$ is the affine toric subvariety of $\X_{s,\nu}$ corresponding to $\Delta_v \in \Sigma_1(\nu)$, the cone generated by $\Delta$ in $\Sigma_1(\nu)$}. 
We have an inclusion morphism $\iota: \overline{\U}_{\nu, \Delta} \to \U_{\Delta}$ 
which is the restriction of the inclusion morphism $\X_s \to \X$. It corresponds to the ring homomorphism $$\iota^*: R_\Delta=\O[\chi^u \varpi^k \mid (u, k) \in \sigma^\vee \cap \wt{M}] \to \overline{R}_{\nu, \Delta} = \k[\chi^u \mid (u, -\langle u, \nu \rangle) \in \sigma^\vee \cap \wt{M}],$$ defined by $\O \to \k$ and: 
$$\iota^*(\chi^u \varpi^k) = \begin{cases}
\chi^u~ \text{ if } k = -\langle u, \nu \rangle\\
0 ~\text{ if } k > -\langle u, \nu \rangle.
\end{cases}$$
Let $B_\Delta = \{b_{\Delta, 1}, \ldots, b_{\Delta, r} \} \subset E$ and $u(\Delta) = \{u_{\Delta, 1}, \ldots, u_{\Delta, r}\} \subset M$ be the $\O$-basis for $\Lambda = \Phi_1(\nu)$ and the multiset of characters corresponding to $\Delta$ respectively. Thus, $\Lambda = \bigoplus_{i=1}^r \O b_{\Delta, i} \subset E$ and we have:
$$H^0(\U_\Delta, \E_{|\U_\Delta}) = R_\Delta \otimes_\O \Lambda = \bigoplus_{i=1}^r R_\Delta b_{\Delta, i} \subset K[M] \otimes_K E.$$
We note that:
$$H^0(\overline{\U}_{\nu, \Delta}, \E_{|\overline{\U}_{\nu, \Delta}}) = H^0(\U_\Delta, \E_{|\U_\Delta}) \otimes_{R_\Delta} \overline{R}_{\nu, \Delta},$$ where we regard $\overline{R}_{\nu, \Delta}$ as an $R_\Delta$-module via the homomorphism $\iota^*$ above. We thus conclude that:
\begin{align*}
H^0(\overline{\U}_{\nu, \Delta}, \E_{|\overline{\U}_{\nu, \Delta}}) &= (R_\Delta \otimes_\O \Lambda) \otimes_{R_\Delta} \overline{R}_{\nu, \Delta} = (\bigoplus_{i=1}^r R_\Delta b_{\Delta, i}) \otimes_{R_\Delta} \overline{R}_{\nu, \Delta}\\
&= \overline{R}_{\nu, \Delta} \otimes_\k E_\Lambda = \bigoplus_{i=1}^r \overline{R}_{\nu, \Delta} \overline{b}_{\Delta, i} \subset \k[M] \otimes_\k E_\Lambda,\\
\end{align*}
where $\overline{b}_{\Delta, i}$ denotes the image of $b_{\Delta, i}$ in $E_\Lambda = \Lambda  \otimes_\O \k = \Lambda / \m \Lambda$.
The claim now follows from Propositions \ref{prop-link-vertex} and \ref{prop-Phi-nu-PL}.
\end{proof}

\section{Equivariant Chern classes of toric vector bundles over a DVR }

Given a $T_s$-scheme $X$ and and an equivariant vector bundle $E$, following \cite[Section 2.4]{EG} one defines equivariant Chern classes $c_i^{T_{s}}(E) \in \CH^i_{T_{s}}(X)$ in the equivariant Chow cohomology groups of $X$. 

Now, let $\Sigma, \Sigma_1$ and $\Sigma_0$ be as in Section \ref{sec:equiv-chow}. In particular, $\Sigma_1$ is complete and regular, and its vertices are contained in the lattice $N\times\{1\}$.
\begin{defn}\label{def:eq-chern} Given a toric vector bundle $\E$ over the proper, regular toric scheme $\X_{\Sigma_1}$ over $S$, one defines $c_i^T(\E)$, the $i$-th equivariant Chern class of $\E$, by its restriction to the special fiber, i.~e.
\[
c_i^T(\E) \coloneqq c_i^T(\iota^*\E) \in \CH^i_{T_s}(\X_s),
\]
where $\iota \colon \X_s \hookrightarrow \X_{\Sigma}$ denotes the canonical inclusion morphism. By Theorem \ref{th:cohomology-special} this is a piecewise polynomial function on $\Sigma_1$ of degree $i$.
\end{defn}

Recall that for an $\O$-lattice $\Lambda$, we denote the $\k$-vector space $\Lambda \otimes_\O \k$ by $E_\Lambda$. To this vector space there corresponds the extended Tits building $\wt{\B}_\sph(E_\Lambda)$.
Also we recall that for each $i=1, \ldots, r$ we have the $i$-th elementary symmetric function $\epsilon_i: \wt{\B}_\sph(E_\Lambda) \to \R$ (see Section \ref{sec:class}). 

\begin{thm}[The piecewise polynomial map representing the $i$-th equivariant Chern class] \label{th:equi}
Let $\E$ be a toric vector bundle on a toric scheme $\X_\Sigma$. Let $c_i^T$ denote the $i$-th equivariant Chern class of $\E$ represented by a piecewise polynomial function $(f_\nu)$ where $\nu$ runs over the vertices in the polyhedral complex $\Sigma_1$. Then the piecewise polynomial function $f_\nu$ on the fan $\Sigma_1(\nu)$, the star of $\Sigma_1$ at $\nu$, is given by $\epsilon_i \circ \Phi_\nu$.  \end{thm}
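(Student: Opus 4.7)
The plan is to reduce the statement over the DVR to the classical field-case formula (Theorem \ref{th-equiv-Chern-tvb-field}) applied to each irreducible component of the special fiber. By Definition \ref{def:eq-chern}, $c_i^T(\E) = c_i^T(\iota^*\E) \in \CH^i_{T_s}(\X_s)$, and under the isomorphism $\CH^*_{T_s}(\X_s) \simeq \PP^*(\Sigma_1)$ of Theorem \ref{th:cohomology-special}, this class is represented by a tuple $(f_\nu)_\nu$. The coordinate $f_\nu$ is, by the very construction of this isomorphism through the exact sequence
\[
0 \to \CH^p_{T_s}(\X_s) \to \CH^p_{T_s}(\X_s^{[1]}) \xrightarrow{\rho} \CH^p_{T_s}(\X_s^{[2]}),
\]
precisely the piecewise polynomial function on $\Sigma_1(\nu)$ representing the restriction of $c_i^T(\E)$ to the component $V(\nu) = \X_{s,\nu}$ under Payne's isomorphism $\CH^*_{T_s}(\X_{s,\nu}) \simeq \PP^*(\Sigma_1(\nu))$.

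The proof then assembles in three steps. First, by naturality of equivariant Chern classes under the pullback along $\X_{s,\nu} \hookrightarrow \X_s \hookrightarrow \X_\Sigma$, one has $c_i^T(\E)|_{\X_{s,\nu}} = c_i^{T_s}(\E|_{\X_{s,\nu}})$. Second, Theorem \ref{th:link} identifies the piecewise linear map classifying the $T_s$-toric vector bundle $\E|_{\X_{s,\nu}}$ on the toric variety $\X_{s,\nu}$ (whose fan is $\Sigma_1(\nu)$) with $\Phi_\nu \colon |\Sigma_1(\nu)| \to \wt{\B}_\sph(E_\Lambda)$. Third, applying Theorem \ref{th-equiv-Chern-tvb-field} to this toric vector bundle over the field $\k$ yields that its $i$-th equivariant Chern class is represented by $\epsilon_i \circ \Phi_\nu$, whence $f_\nu = \epsilon_i \circ \Phi_\nu$ as claimed.

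The main step requiring care is to verify that the tuple $(\epsilon_i \circ \Phi_\nu)_\nu$ indeed satisfies the compatibility condition (ii) of Definition \ref{def:pp-poly}, so that it lifts to a single element of $\PP^i(\Sigma_1) \simeq \CH^i_{T_s}(\X_s)$. In principle this is automatic from the argument above, since $c_i^T(\E)$ is defined globally on $\X_s$; nonetheless, it is worth confirming directly. Given a full-dimensional $\Delta \in \Sigma_1$ with two vertices $\nu, \nu'$, Proposition \ref{prop-Phi-nu-PL} shows that both $\Phi_\nu|_{\Delta_\nu}$ and $\Phi_{\nu'}|_{\Delta_{\nu'}}$ are determined by the \emph{same} multiset of characters $u(\Delta) = \{u_{\Delta,1}, \ldots, u_{\Delta,r}\}$ inherited from the affine data of $\Phi_1$ on $\Delta$. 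Consequently, both $(\epsilon_i \circ \Phi_\nu)|_{\Delta_\nu}$ and $(\epsilon_i \circ \Phi_{\nu'})|_{\Delta_{\nu'}}$ equal the $i$-th elementary symmetric polynomial in the linear forms $\langle u_{\Delta,j}, \cdot \rangle$, which is a single well-defined polynomial on $N_\R$. The principal technical hurdle is therefore ensuring that the isomorphism of Theorem \ref{th:cohomology-special} genuinely computes the global class via its restrictions to the $V(\nu)$, which is where the exact sequence from \cite{botero-equiv} and the naturality of the operational Chern classes do the essential work.
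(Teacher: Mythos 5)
Your proposal is correct and follows the same route as the paper, whose proof is the one-line observation that the claim is an immediate corollary of Theorem \ref{th:link} and Theorem \ref{th-equiv-Chern-tvb-field}; you simply make explicit the implicit steps (restriction to components via the exact sequence behind Theorem \ref{th:cohomology-special}, naturality of operational Chern classes, and the compatibility check via Proposition \ref{prop-Phi-nu-PL}), all of which are sound.
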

\begin{proof}
The claim is an immediate corollary of Theorem \ref{th:link} and Theorem \ref{th-equiv-Chern-tvb-field}. 
\end{proof}
\begin{rem} The above classification might be useful in order to consider moduli spaces of toric vector bundles over a DVR with fixed equivariant total Chern class.
\end{rem}
We can also recover the equivariant Chern classes over the generic fiber. Let $\E$ be a toric vector bundle on a toric scheme $\X_\Sigma$. Let $\Phi_{\E} \colon |\Sigma| \to \wt{\B}(E)$ be the corresponding graded piecewise linear map to the total extended building. The restriction to $|\Sigma_0|$ gives rise to a piecewise linear map $\Phi_{\E,0} \colon |\Sigma_0| \to \wt{\B}_0(E)=\wt{\B}_\sph(E)$ to the spherical Tits building. This is the piecewise linear map corresponding to the restriction of $\E$ to the generic fiber $\X_{\Sigma, \eta}$ (\cite{KMT}). As a corollary of Theorem \ref{th-equiv-Chern-tvb-field} we obtain the following.
\begin{cor}\label{cor:recovering}
The equivariant Chern classes of the restriction of $\E$ to the generic fiber are represented by the piecewise polynomial functions 
\[
\epsilon_i \circ \Phi_{\E,0}.
\]
  \end{cor}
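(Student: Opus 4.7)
The plan is to deduce this corollary directly from the field case (Theorem~\ref{th-equiv-Chern-tvb-field}) by invoking the compatibility between the classification of toric vector bundles over a DVR and the classification over a field, as recorded in the last sentence of Theorem~\ref{th:vb-dvr}. The restriction $\E|_{\X_{\Sigma,\eta}}$ is itself a toric vector bundle, on the toric variety $\X_{\Sigma,\eta} = X_{\Sigma_0}$ over the field $K$, with torus $T_\eta$. Its equivariant Chern classes therefore live in $\CH^i_{T_\eta}(X_{\Sigma_0})$, which by Theorem~\ref{th:payne-chow} is identified with the graded algebra of piecewise polynomial functions $\PP^*(\Sigma_0)$ on the fan $\Sigma_0$.

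The first step is to identify the piecewise linear map corresponding to $\E|_{\X_{\Sigma,\eta}}$. By Theorem~\ref{th:vb-dvr}, the restriction $\Phi_{\E, 0} = \Phi_\E|_{|\Sigma_0|}: |\Sigma_0| \to \wt{\B}_\sph(E)$ is precisely the piecewise linear map from Klyachko-type classification associated to the toric vector bundle $\E|_{\X_{\Sigma,\eta}}$. Thus we are placed directly in the setting of the field case of Theorem~\ref{th-equiv-Chern-tvb-field}, with fan $\Omega = \Sigma_0$, toric variety $X_\Omega = \X_{\Sigma, \eta}$ and piecewise linear map $\Phi_\E = \Phi_{\E, 0}$.

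Applying Theorem~\ref{th-equiv-Chern-tvb-field} then gives immediately that the $i$-th equivariant Chern class of $\E|_{\X_{\Sigma,\eta}}$ is represented, as an element of $\PP^i(\Sigma_0) \simeq \CH^i_{T_\eta}(X_{\Sigma_0})$, by the piecewise polynomial function $\epsilon_i \circ \Phi_{\E, 0}$, which is the content of the corollary. No genuine obstacle arises here: the only conceptual point is to recognize that the piecewise linear map produced by restricting the graded piecewise linear map $\Phi_\E$ to the level $0$ stratum is the same object which classifies the generic fiber toric vector bundle over a field, so that the field-case computation of equivariant Chern classes applies verbatim.
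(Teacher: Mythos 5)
Your proposal is correct and coincides with the paper's own argument: the paper likewise notes (via Theorem~\ref{th:vb-dvr}, i.e.\ the classification from \cite{KMT}) that $\Phi_{\E,0}$ is the piecewise linear map classifying $\E|_{\X_{\Sigma,\eta}}$ and then deduces the statement directly from the field-case Theorem~\ref{th-equiv-Chern-tvb-field}.
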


\section{Connections to the Arakelov theory of toric varieties}\label{sec:arakelov}
 The goal of Arakelov theory is to do intersection theory over rings of integers of number fields. The main idea is as follows. Consider an algebraic variety over a number field $K$. Then classical Arakelov theory (\cite{arakelov1}, \cite{arakelov2}, \cite{Fa}, \cite{GS1}, \cite{GS2}) considers algebraic cycles and vector bundles on a fixed regular model $\X$ of $X$ over the ring of integers of $K$, together with analytic data on the complex points of $X$. Adding this data can be thought of as a \emph{compactification} of $\X$ and leads to an arithmetic intersection theory on $X$. 

One of the main advents of Arakelov intersection theory is the concept of the \emph{height} of a subvariety, defined in pure analogy with its degree, replacing classical
intersection theory with arithmetic intersection theory. This has provided many important breakthroughs, starting with Vojta's new proof of the Mordell conjecture \cite{V}, or Faltings' proof of Serge Lang's generalization of the Mordell conjecture \cite{Fa1}.

Around 2010, the so called \emph{toric dictionary} was extended by J.~Burgos, P.~Philippon and M.~Sombra to the Arakelov theory of toric line bundles on toric varieties in \cite{BPS}. For example an arithmetic analogue of the Bernštein-Kušnirenko-Khovanskii theorem is given. 

Let us briefly explain some of their results. We refer to \cite{BPS} for details. Let $K$ be either $\mathbb{C}$ or a discretely valued field. Let $T$ be a split torus over $K$ and $X$ a smooth, complete toric variety over $K$ with torus $T$. Let $\mathcal{L}$ be a toric line bundle on $X$ and let $X^{\operatorname{an}}$ and $\mathcal{L}^{\operatorname{an}}$ be the corresponding analytifications. If $K =\mathbb{C}$ then $X^{\operatorname{an}}$ is just the complex space $X(\mathbb{C})$, whereas in the non-Archimedean case it is the
Berkovich space associated to $X$. 

The basic metrics that one puts on $\mathcal{L}^{\operatorname{an}}$ are the smooth metrics in the Archimedean case and the algebraic metrics in the non-Archimedean case, that is, metrics induced by an integral model of the pair $\left(X,\mathcal{L}\right)$. In the Archimedean case, one has a semipositivity notion in terms of currents, while in the non-Archimedean case, the semipositivity notion comes from algebraic positivity notions of the corresponding divisors (see \cite{BPS} for details). Uniform limits of semipositive smooth, respectively model metrics leads to the notion of semipositive metrics on $\mathcal{L}^{\operatorname{an}}$ in the Archimedean and non-Archimedean cases, respectively.

Let $X$ be a complete toric variety. As usual, for any geometric object on $X$ for which one hopes to have a combinatorial description, one asks for a torus-invariance property. A metric $\| \cdot \| = \left\{\| \cdot \|_x\right\}_{x \in X^{\an}}$ on (the analytifiaction of) a toric line bundle $\ca L^{\on{an}}$ is \emph{toric} if $t \mapsto \|s(t)\|_t$ is invariant under the compact torus $\mathbb{S} \subset T^{\on{an}}$ for any section $s$ of $\ca L$.

The first main arithmetic addition to the toric dictionary is the following classification result (see \cite[Theorem 4.8.1]{BPS}). 
\begin{thm}
    Let $K$ be either $\C$ or a discretely valued field. Then there is a bijection between the space of semipositive toric
metrics on $\ca L^{\on{an}}$ and the space of concave functions $\psi$ on $N_{\R}$ such that $\left|\psi-\gamma_{\ca L}\right|$ is bounded. Here, $\gamma_{\ca L}$ denotes the piecewise linear function on $N_{\R}$ associated to the toric line bundle $\ca L$ on $X$. 
\end{thm}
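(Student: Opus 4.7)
The plan is to establish the correspondence in two stages: first, build a bijection between (all) continuous toric metrics on $\ca L^{\on{an}}$ and continuous functions on $N_{\R}$ with bounded difference from $\gamma_{\ca L}$, then identify semipositivity with concavity.

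For the first stage, fix a nowhere-vanishing toric section $s$ of $\ca L$ over the open orbit $T \subset X$. For any toric metric $\|\cdot\|$, the function $t \mapsto -\log\|s(t)\|_t$ on $T^{\on{an}}$ is invariant under the compact torus $\mathbb{S}$ by hypothesis. In the Archimedean case, the quotient $T(\C)/\mathbb{S}$ is identified with $N_{\R}$ via $z \mapsto (\log|z_1|, \ldots, \log|z_n|)$; in the non-Archimedean case, the tropicalization map $\on{trop}\colon T^{\on{an}} \to N_{\R}$ has exactly the $\mathbb{S}$-orbits as fibers. In either case, invariance produces a continuous function $\psi\colon N_{\R} \to \R$. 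The canonical metric coming directly from $\ca L$ itself yields the piecewise linear function $\gamma_{\ca L}$ by a direct computation, and the difference $\psi - \gamma_{\ca L}$ measures the deviation of the given metric from this canonical one. Boundedness of this difference is equivalent to the metric extending continuously to the entire compact toric variety $X^{\on{an}}$, which is checked chart by chart over each torus-invariant affine open $U_{\sigma}$ corresponding to a cone $\sigma$ in the fan.

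For the semipositivity-concavity equivalence, one translates positivity through the toric Monge--Amp\`ere operator. In the Archimedean setting, the first Chern current $c_1(\ca L, \|\cdot\|)$ pulled back along the exponential $(N_{\R} \oplus iN_{\R})/(2\pi iN) \to T(\C)$ depends only on real parts, and its associated $(1,1)$-form becomes, up to sign, the real Hessian of $\psi$; semipositivity is thus equivalent to concavity. In the non-Archimedean case, semipositive toric model metrics correspond exactly to nef toric models of $\ca L$, whose combinatorial data is a rational piecewise affine concave function on $N_{\R}$, and uniform limits of concave functions remain concave, giving one direction of the equivalence cleanly.

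The main obstacle is the converse: given concave $\psi$ with $|\psi - \gamma_{\ca L}|$ bounded, one must construct a semipositive toric metric realizing it. This requires approximating $\psi$ uniformly by concave functions arising from semipositive smooth toric metrics (Archimedean) or nef toric models (non-Archimedean). In the Archimedean case one adapts Demailly-type regularization of plurisubharmonic functions so as to preserve toric invariance; in the non-Archimedean case one uses rational piecewise affine concave approximations of $\psi$ and checks that each is realized by a nef model over a suitable refinement of the fan. Verifying that these approximations retain the bounded difference condition, and that the limit metric is both continuous and genuinely semipositive (not merely a limit of such), is the technical heart of the argument and is where the smoothness/regularity hypotheses on $X$ enter most forcefully.
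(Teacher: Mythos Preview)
The paper does not prove this theorem. It is quoted in Section~\ref{sec:arakelov} purely as background and is attributed to Burgos--Philippon--Sombra: the text introduces it as ``the following classification result (see \cite[Theorem 4.8.1]{BPS})'' and gives no argument. Consequently there is no ``paper's own proof'' to compare your proposal against.

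For what it is worth, your two-stage outline (first identifying continuous toric metrics with continuous functions $\psi$ on $N_\R$ having $|\psi-\gamma_{\ca L}|$ bounded via the tropicalization/quotient map, then matching semipositivity with concavity and handling the approximation direction) is in the spirit of how this result is actually established in \cite{BPS}. But since the present paper simply invokes the result, any assessment of your sketch would have to be made against \cite{BPS}, not against this paper.
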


We expect that the above theorem extends to higher rank toric vector bundles using the theory of buildings. The present article is an indictaion of this expectation in the non-Archimedean case. 

Let $K$ be as above and let $\E$ be a toric vector bundle over the toric variety $X$. A continuous metric on $\E^{an}$ is a continuous family of norms $\| \cdot \|_{\E^{\on{an}}} = \left\{ \| \cdot \|_x\right\}_{x \in X^{\on{an}}}$. The metric is said to be \emph{toric} if $t \mapsto \|s(t)\|_t$ is invariant with respect to the compact torus $\mathbb{S}\subset T^{\on{an}}$ for any (invariant) section~$s$. As in the case of line bundles, if $K$ is discretely valued, then a toric model $\left(\X,\wt{\E}\right)$ of $(X,\E)$ induces a toric metric on $\E^{\on{an}}$. Metrics arising in this way are also called \emph{model metrics}.

A metric on $\E^{an}$ is \emph{semipositive} if the line bundle $\mathcal{O}_{\E}(1)$ on the projectivized total space $\mathbb{P}(\E)$, endowed with the Fubini-Study metric, is semipositive. For model metrics, this is equivalent to the corresponding model being nef. 

The characterization of equivariant Chern classes of toric vector bundles over a DVR given Theorem \ref{th:equi} is a first step towards a combinatorial classification of toric semipositive metrics in the non-Archimedean case. Indeed, the equivariant Chern classes of a model $\wt{\E}$ are the equivariant, non-Archimedean analogues of hermitian Chern forms.

We expect the following classification result. 

\begin{conj}\label{conj:class-metrics}
    Let $K$ be a discretely valued field and let $\E$ be a toric vector bundle over the toric variety $X$ of rank $r$. Let $\Phi_{\E}$ denote the piecewise linear map to the extended spherical Tits building $\wt{\B}_{\sph}(\GL(r,K))$ corresponding to $\E$. Then there is a bijection between the space of semipositive toric metrics on $\E^{\on{an}}$ and the space of ``concave'' piecewise affine functions $N_{\R}  \to \wt{\B}_{\aff}(\GL(r,K))$ satisfying an asymptotic growth condition with respect to $\Phi_{\E}$.
\end{conj}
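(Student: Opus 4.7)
The plan is to parallel the line-bundle case of \cite[Theorem 4.8.1]{BPS}, replacing scalar concave functions on $N_\R$ by ``concave'' piecewise affine maps to the extended affine building, and using Theorem \ref{th:vb-dvr} as the bridge between toric models and building-valued maps. First I would set up the basic dictionary: by Theorem \ref{th:vb-dvr}, a toric model $(\X_\Sigma, \wt{\E})$ of $(X, \E)$ is equivalent to a graded piecewise linear map $\Phi_{\wt{\E}}: |\Sigma| \to \wt{\B}(E)$ whose restriction to $|\Sigma_0|$ recovers $\Phi_\E$, and hence to a piecewise affine map $\Phi_{\wt{\E}, 1}: |\Sigma_1| \to \wt{\B}_\aff(E)$ with recession $\Phi_\E$. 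I would then show that this assignment descends to isomorphism classes of toric model metrics by checking that two models inducing the same metric become isomorphic after a common refinement of the associated fans.

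The central step is to characterise \emph{semipositivity} of a toric model $\wt{\E}$ combinatorially. By definition, semipositivity means nefness of $\ca{O}_{\P(\wt{\E})}(1)$; restricted to each irreducible component $\X_{s,\nu}$ of the special fiber it reduces, via Theorem \ref{th:link}, to nefness of the toric vector bundle $\E_{|\X_{s,\nu}}$ over the residual toric variety, which by Hering--Mustata--Payne is equivalent to a concavity condition on the piecewise linear map $\Phi_\nu: |\Sigma_1(\nu)| \to \wt{\B}_\sph(E_\Lambda)$. Gluing these vertex-wise conditions, and encoding the jumps of $\Phi_{\wt{\E}, 1}$ between adjacent vertices via the transversal affine direction of $\wt{\B}_\aff(E)$, yields the working definition of a \emph{concave} piecewise affine map: each link map is concave in the spherical sense and, in addition, any apartment-wise scalar function $x \mapsto \Phi_{\wt{\E}, 1}(x)(b)$ obtained by evaluating on an adapted basis vector is concave along compatible polyhedra. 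In rank one this recovers exactly the BPS concavity, since $\wt{\B}_\aff(E) \cong \R$ and $\Phi_\E = \gamma_{\ca{L}}$.

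Semipositive continuous toric metrics are by definition uniform limits of semipositive model metrics. Since $\wt{\B}_\aff(E)$ is complete $\on{CAT}(0)$ in its apartment-wise Euclidean metric, a uniformly convergent sequence of semipositive model metrics produces a sequence of concave piecewise affine maps of common recession $\Phi_\E$ converging uniformly on compacta to a continuous concave map $\Phi: N_\R \to \wt{\B}_\aff(E)$; the asymptotic growth condition $\sup_{x \in N_\R} d(\Phi(x), \Phi_\E(x)) < \infty$ then mirrors precisely the boundedness of $|\psi - \gamma_{\ca{L}}|$ in the line-bundle classification. Conversely, given such a concave map I would build approximations by concave \emph{rational} piecewise affine maps on finer and finer rational subdivisions of $N_\R$, using tropical Legendre-type regularisations within apartments and gluing across the building; each approximant corresponds to a nef toric model by Theorem \ref{th:vb-dvr}, and density closes the bijection.

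The main obstacle is the second step: pinning down the correct notion of concavity for a map into $\wt{\B}_\aff(E)$ and matching it with nefness of $\ca{O}_{\P(\wt{\E})}(1)$. Already over a field only some toric vector bundles are nef, and the known criteria involve the parliament of polytopes rather than a single convex function; over a DVR one must simultaneously control nefness on every component of $\X_s$ \emph{and} along the transversal affine directions detected by $\Phi_{\wt{\E}, 1}$, which is precisely where the geometry of the affine building replaces the real line of the line-bundle case. A related technicality is choosing a topology on piecewise affine maps to $\wt{\B}_\aff(E)$ under which uniform convergence of metrics matches uniform convergence of maps on compacta with compatible control of the recession $\Phi_\E$; once this framework is in place, Steps~1, 3 and 4 should follow the line-bundle template of \cite{BPS} with essentially notational modifications.
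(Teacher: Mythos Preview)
The statement you are attempting to prove is a \emph{conjecture} in the paper, not a theorem; the paper offers no proof and explicitly flags the key ingredients as undetermined. Immediately after stating the conjecture the authors write that ``the appropriate definition of `concavity' for such functions is yet to be determined'' and that ``the specific asymptotic condition is also to be determined.'' So there is no paper proof to compare your proposal against.

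What you have written is therefore not a proof but a plausible research outline, and as such it is broadly aligned with the heuristics the paper gestures at: using Theorem~\ref{th:vb-dvr} to pass from toric models to piecewise affine maps, using Theorem~\ref{th:link} to analyse the special fiber vertex by vertex, and taking uniform limits as in the line-bundle case of \cite{BPS}. You are also honest about the main gap, namely that the correct notion of concavity for a building-valued map and its equivalence with nefness of $\mathcal{O}_{\mathbb{P}(\wt{\E})}(1)$ are not established; this is exactly the point the paper leaves open. Until that definition is fixed and the nefness criterion is proved, your Steps~1, 3 and 4 cannot be carried out ``with essentially notational modifications'': the approximation argument in Step~4 presupposes that rational piecewise affine maps satisfying your concavity condition correspond to \emph{nef} models, and the density/compactness statements in Step~3 depend on a topology compatible with a still-unspecified convexity structure. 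In short, your proposal is a reasonable blueprint for attacking the conjecture, but it does not constitute a proof, and the paper does not claim one either.
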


Note that the the appropriate definition of ``concavity'' for such functions is yet to be determined. 
The specific asymptotic condition is also to be determined. However, note that intuitively this makes sense since the (extended) spherical Tits building can be seen as the boundary of a compactification of the affine building.

A motivation for aiming at a classification as in Conjecture \ref{conj:class-metrics} is to generalize and find non-Archimedean analogues of Chern currents associated to singular semipositive hermitian metrics, by taking projective limits of Chern classes, indexed over all toric models. 

Another natural question is: what is the Archimedean analogue? 
More precisely, it is known that Bruhat--Tits buildings serve as non-Archimedean analogues of symmetric spaces. The latter parametrize toric hermitian metrics on the torus. In analogy to the non-Archimedean case, spherical Tits buildings can be also seen as boundaries of symmetric spaces. Hence, also here it makes sense to think of toric metrics as continuous functions to the symmetric space with an asymptotic behaviour controlled by the spherical building. These considerations are work in progress.

A classification as described above gives an important step towards a complete combinatorial description of the Arakelov theory of toric varieties.
Indeed, a long term goal is a combinatorial description of the arihmetic Groethendieck--Riemann--Roch theorem for toric varieties. Among many other things, we need to study direct images and Bott--Chern classes of toric vector bundles,  and try to describe these in combinatorial terms.  The classification of semipositive metrics on toric vector bundles and the relation to the theory of buildings seems to be a promising step in this direction.

\printbibliography

\end{document}